\documentclass[12pt,reqno]{amsart}
\usepackage{fullpage}
\usepackage{amssymb}
\usepackage{amsthm}
\usepackage{amsmath}
\usepackage[usenames]{color}
\usepackage{graphicx}
\usepackage[colorlinks=true,
linkcolor=webgreen,
filecolor=webbrown,
citecolor=webgreen]{hyperref}
\usepackage{hyperref}
\definecolor{webgreen}{rgb}{0,.5,0}
\definecolor{webbrown}{rgb}{.6,0,0}
\definecolor{red}{rgb}{1,0,0}

\usepackage{color}
\newtheorem{theorem}{Theorem}[section]

\newtheorem{conjecture}[theorem]{Conjecture}
\newtheorem{corollary}[theorem] {Corollary}
\newtheorem{definition}[theorem]{Definition}

\newtheorem{lemma} [theorem]{Lemma}

\newtheorem{question}[theorem]{Question}

\newtheorem{proposition}[theorem]{Proposition}
\newtheorem{remark}[theorem]{Remark}

\begin{document}

\title{On Diophantine tuples involving star numbers}
\author[Tapas Chatterjee]{Tapas Chatterjee}

\address{%
Department of Mathematics,\\Indian Institute of Technology Ropar, Punjab, India.}

\email{tapasc@iitrpr.ac.in}

\author{Sagar Mandal$^{*}$}
\address{Department of Mathematics\\Indian Institute of Technology Ropar, Punjab, India.}
\email{sagar.25maz0008@iitrpr.ac.in, sagarmandal31415@gmail.com}
\thanks{$^{*}$The second author is financially supported by the University Grants Commission (UGC), Government of India, through the award of the Junior Research Fellowship (JRF) with ref. No. 241620111598.}
\maketitle
\let\thefootnote\relax

\footnotetext{\it MSC2020:  11B37, 11D09, 11J86} 
\maketitle
\footnotetext{\it Keywords: Diophantine m–tuples, Pell equations, reduction method, star numbers.}
\let\thefootnote\relax

\begin{abstract}
In this paper, we discuss two related Diophantine tuple problems involving star numbers $S_n=6n(n-1)+1$. First, we investigate classical $D(k)$-tuples whose elements are star numbers. We construct infinitely many $D(k)$-pairs for several positive and negative admissible values of $k$, and we prove several nonexistence results for $D(k)$-triples and quadruples. Second, we introduce a modified notion of a star number $D(k)$-tuple, requiring $a_ia_j+k$ itself to be a star number. We construct several infinite families of $D(1)$-triples, and study their extensions to quadruples. In particular, we prove that $\{4,108,165\}$ has the unique extension $\{4,108,165,360\}$ to a star number $D(1)$-quadruple. The main tools used in this paper are generalized Pell equations, Nagell's theorem, lower bounds for linear forms in logarithms due to Matveev and Dujella-Peth{\H{o}} version of the Baker–Davenport reduction method in diophantine approximation.
\end{abstract}

\section{Introduction}
The theory of the Diophantine tuple started with the observation of Diophantus, who noted that the product of any two distinct elements of the set $\{\dfrac{1}{16},\dfrac{33}{16},\dfrac{17}{4},\dfrac{105}{16}\}$ increased by $1$ is a square of a rational number. Fermat later found the classical Diophantine quadruple $\{1,3,8,120\}$ and Euler subsequently showed that there exist infinitely many  Diophantine quadruple. Later, several generalizations of this problem were observed, so we have the following definition
\begin{definition}
Let $k$ be a nonzero integer. A set of $m$ distinct positive integers is called a $D(k)$ m-tuple if the product of any two of its distinct elements increased by $k$ is a perfect square.    
\end{definition}
When $m=2,3,4$, such sets are referred to as Diophantine $D(k)$-pairs, triples and
quadruples, respectively. One of the fundamental questions in this area concerns the possible size of such sets. Dujella \cite{conjecture} proved that if an integer $k\not\equiv 2 \pmod{4}$ and $k\not\in\{-4,-3,-1,3,5,8,12,20\}$, then there exists at least one $D(k)$-quadruple. Further, he made the following conjecture:
\begin{conjecture}[Dujella]\label{con 1}
If $k\in\{-4,-3,-1,3,5,8,12,20\},$ then there does not exist a $D(k)$-quadruple.    
\end{conjecture}
He, Togb\'e and  Ziegler \cite{r1} proved the folklore conjecture of non-existence of quintuple when $k=1$. On the other hand, Bonciocat, Cipu and Mignotte \cite{BCM} proved the conjecture of non-existence quadruple when $k=-1,-4$, whereas Trebje\v{s}anin and Filipin \cite{D4} proved that there do not exist $D(4)$-quintuple. The Fibonacci sequence $\{F_n:n\geq0\}$ (\href{https://oeis.org/A000045}{A000045}) is defined by $F_0=0,F_1=1,$ and $ F_{n+2}=F_{n+1}+F_n$ for $n\geq0$. The Pell sequence $\{P_n:n\geq0\}$ (\href{https://oeis.org/A000129}{A000129}) is defined by 
$$P_n = 
\begin{cases}
0 & \text{if } n = 0, \\
1 & \text{if } n = 1, \\
2P_{n-1} + P_{n-2} & \text{if } n\geq2.\end{cases}$$
Diophantine triples and quadruples involving Fibonacci and Pell numbers have been studied in \cite{Fib1,Fib2,pell1,pell2}. In particular, the non-existence of Diophantine quadruples consisting of Fibonacci or Pell numbers was established in \cite{Fib1,pell2}. A Triangular number is a number of the form $T_n=\dfrac{n(n+1)}{2},$ where $n\geq1$ is an integer. One can see that \href{https://oeis.org/A000217}{A000217} enumerates triangular number. The study of Diophantine $m$-tuples involving triangular numbers began in \cite{1st}. Subsequent work by Bliznac Trebje\v{s}anin \cite{r4,r5} addressed the properties $D(u^2)$ and $D(a)$, respectively. Further developments can be found in \cite{r3,r6, Hamtat}. More than $600$ papers have been published on this topic.  The latest progress and all references can be found at Dujella's webpage \cite{homepage}.

Motivated by these developments involving Fibonacci, Pell, triangular numbers, in this paper, we consider the Diophantine tuple problem in the setting of star numbers. The $n$-th star number $S_n$ is defined as $S_n=6n(n-1)+1$, where $n\geq1$ is an integer. It is easy to observe that $S_n=1+12T_{n-1}$. One can see that \href{https://oeis.org/A003154}{A003154} enumerates star numbers. 

The following proposition establishes the form of admissible integers $k\neq0$ for which $\{S_a,S_b\}$ forms a $D(k)$-pair.
\begin{proposition}\label{Theorem 1}
If $\{S_a,S_b\}$ is a $D(k)$-pair, then $k\equiv0,3,8,11\pmod{12}.$
\end{proposition}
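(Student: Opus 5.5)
Since $S_n=6n(n-1)+1$ and $n(n-1)$ is always even, every star number satisfies $S_n\equiv 1\pmod{12}$. The plan is therefore to reduce everything modulo $12$. If $\{S_a,S_b\}$ is a $D(k)$-pair, then $S_aS_b+k=x^2$ for some integer $x$. Because $S_aS_b\equiv 1\pmod{12}$, this gives $x^2\equiv 1+k\pmod{12}$. Hence $k+1$ must be a quadratic residue modulo $12$.

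Next I will list the squares modulo $12$. Running $x$ over $0,1,\dots,11$ gives $x^2\equiv 0,1,4,9,4,1,0,1,4,9,4,1$. So the set of squares modulo $12$ is exactly $\{0,1,4,9\}$. The condition $k+1\in\{0,1,4,9\}\pmod{12}$ then translates to $k\equiv 11,0,3,8\pmod{12}$, which is the claimed list $k\equiv 0,3,8,11\pmod{12}$.

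If one wishes to avoid the full table, the same conclusion follows by working modulo $3$ and modulo $4$ separately and combining with the Chinese Remainder Theorem. Modulo $4$ we need $k+1\equiv 0,1$, that is, $k\equiv 3,0\pmod 4$. Modulo $3$ we need $k+1\equiv 0,1$, that is, $k\equiv 2,0\pmod 3$. The four combinations give $k\equiv 0,8,3,11\pmod{12}$.

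There is no real obstacle here. The only point needing care is the initial observation that $6n(n-1)$ is divisible by $12$, and this holds because $n(n-1)$ is a product of consecutive integers and hence even. Note that the statement gives only a necessary condition: the fact that each residue class is actually attained is presumably handled later by the explicit constructions in the paper, and is not required for this proposition.
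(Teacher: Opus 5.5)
Your proposal is correct and follows the paper's own argument: both use $S_a\equiv S_b\equiv 1\pmod{12}$ and the fact that squares modulo $12$ lie in $\{0,1,4,9\}$ to get $k\equiv 0,3,8,11\pmod{12}$. Your CRT variant via moduli $3$ and $4$ is a valid but inessential repackaging of the same computation, and you justify $12\mid 6n(n-1)$, which the paper leaves implicit.
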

\begin{proof}
Since $\{S_a,S_b\}$ is a $D(k)$-pair, we have some integer $m$ such that $S_aS_b+k=m^2$. Since $m^2\equiv 0, 1, 4, 9\pmod{12}$, and $S_a, S_b\equiv 1 \pmod{12}$, we have $k\equiv 0, 3, 8, 11\pmod{12}$.
\end{proof}
In this paper, we mainly consider certain admissible values of $k$ that is $k\in \{-12,-9,-4,\\-1,3,8,11,12\}$. Let $C_{\pm}=S_a+S_b\pm2\sqrt{S_aS_b+k}$ be an integer such that $\{S_a,S_b,C_{\pm}\}$ is $D(k)$-triple, if such extension exists for the set $\{S_a,S_b\}$ such that $S_{c_{\pm}}=C_{\pm}$, we call it star regular extension. But any star pair has no star regular extension because if such an extension exists, then
$$S_aS_b+k\equiv k+1 \pmod{6},$$
which gives
$$\sqrt{S_aS_b+k}\pmod{6}=\begin{cases}
1,5 & k=-12,12 \\
2,4 & k=-9,3, \\
3 & k=-4,8, \\
0& k=-1,11.
\end{cases}$$
Therefore,
$$1=S_{c_+}\pmod{6}=\begin{cases}
4,0 & k=-12,12 \\
0,4 & k=-9,3, \\
2 & k=-4,8, \\
2& k=-1,11,
\end{cases}$$
and
$$1=S_{c_-}\pmod{6}=\begin{cases}
0,4 & k=-12,12 \\
4,0 & k=-9,3, \\
2 & k=-4,8, \\
2& k=-1,11,
\end{cases}$$
which is a contradiction.

Our first theorem in this direction establishes the existence and nonexistence of pairs $\{S_a,S_b\}$.
\begin{theorem}\label{Theorem 2}
 For $k=3,8,12,-1,-4,-9,-12$, there exist infinitely many $D(k)$-pairs containing star numbers, and  there exists no $D(11)$-pair containing star numbers.   
\end{theorem}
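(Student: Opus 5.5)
The plan has two parts: explicit infinite families for the seven values $k\in\{3,8,12,-1,-4,-9,-12\}$, and a congruence obstruction for $k=11$.

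\emph{Existence.} Fix $S_1=1$ as one element. Then $\{1,S_b\}$ is a $D(k)$-pair exactly when $S_b+k=m^2$, that is, $6b(b-1)+1+k=m^2$. Put $x=2b-1$ and multiply by $4$. This gives $6x^2-2+4k=4m^2$, or
$$2m^2-3x^2=2k-1 .$$
This is a generalized Pell equation. The automorphism $(m,x)\mapsto(5m+6x,\,4m+5x)$ comes from the unit $5+2\sqrt6$ and preserves $2m^2-3x^2$. It also preserves the parity of $x$, since $4m+5x\equiv x\pmod 2$. So from one seed with $x$ odd we get infinitely many solutions with $x$ odd and $x\to\infty$. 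Each such $x$ gives an integer $b=(x+1)/2$, hence infinitely many distinct star numbers $S_b\neq 1$.

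Seeds, found by checking small $b$:
\begin{itemize}
\item $k=3$: $S_1+3=4$, with $(x,m)=(1,2)$.
\item $k=8$: $S_1+8=9$, with $(1,3)$.
\item $k=12$: $S_2+12=25$, with $(3,5)$.
\item $k=-1$: $S_3-1=36$, with $(5,6)$.
\item $k=-4$: $S_2-4=9$, with $(3,3)$.
\item $k=-9$: $S_4-9=64$, with $(7,8)$.
\item $k=-12$: $S_2-12=1$, with $(3,1)$.
\end{itemize}
Where the seed is $b=1$, I would take the images under the automorphism to get $S_b\ne S_1$. If a cleaner presentation is wanted, one can instead write the resulting $b_n$ as an explicit linear recurrence of order two plus a constant. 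Either way this yields infinitely many $D(k)$-pairs $\{1,S_{b_n}\}$.

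\emph{Nonexistence for $k=11$.} I would work modulo powers of $3$. Modulo $9$ we have $S_n\equiv1$ when $n\equiv0,1\pmod3$ and $S_n\equiv4$ when $n\equiv2\pmod3$. So $S_aS_b\in\{1,4,7\}\pmod 9$, and $S_aS_b+11\in\{3,6,0\}\pmod9$. The residues $3$ and $6$ are not squares modulo $9$, which leaves only the case $a\equiv b\equiv2\pmod3$. In that case write $n=3j+2$. Then $S_n=54j^2+54j+13\equiv13\pmod{27}$, so $S_aS_b+11\equiv169+11\equiv18\pmod{27}$. But $m^2\equiv18\pmod{27}$ forces $m=3u$ with $u^2\equiv2\pmod3$, which is impossible. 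Hence no $D(11)$-pair of star numbers exists.

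The main obstacle is the existence part, specifically making sure a seed with $x$ odd exists in every case and that the orbit really produces distinct admissible $b\ge1$. The Pell structure and the parity invariance handle the rest. The $k=11$ argument is routine once one thinks to lift from modulus $9$ to $27$.
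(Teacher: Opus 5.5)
Your proposal is correct and follows the same route as the paper. The paper also pairs $S_b$ with $S_1=1$ and reduces to a Pell-type equation: your $2m^2-3x^2=2k-1$ is its $X^2-6Y^2=4k-2$ with $X=2m$, $Y=2b-1$. It then generates infinitely many solutions from a seed via the unit $5+2\sqrt6$, and excludes $k=11$ by residues modulo $27$. Your explicit parity check, your discarding of the degenerate $b=1$ seed, and your mod-$9$-then-mod-$27$ ordering are only small tidy-ups of that same argument.
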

The next theorem produces necessary lower bounds for the index $b$ in terms of $a$ for $D(k)$-pairs.
\begin{theorem}\label{Theorem 3}
   Let $2\leq a<b$ be two positive integers and suppose that $\{S_a,S_b\}$ is a $D(k)$-pair. Then
  \begin{enumerate}
      \item $b=a+1$ or $b\geq 4a+2$ for $k=3$,
      \item $b\geq 4a+2$ for $k=8,-1,-9,-12$,
       \item $b=a+2$ or $b\geq 4a+2$ for $k=12$,
    \item $a=2,b=9$ or $b\geq 4a+2$ for $k=-4$.
  \end{enumerate}
  \end{theorem}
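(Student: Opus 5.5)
The plan is to trap $\sqrt{S_aS_b+k}$ strictly between two consecutive integers whenever $a<b<4a+2$, except in the listed cases. The natural candidate for the integer part is
\[
N:=6ab-3a-3b .
\]
Expanding $S_aS_b=(6a^2-6a+1)(6b^2-6b+1)$ and comparing it with $N^2$, a routine computation should give
\[
S_aS_b+k-N^2 = Q(a,b)+k,\qquad Q(a,b):=-3a^2-3b^2+18ab-6a-6b+1 .
\]
Since $(N+1)^2-N^2=2N+1=12ab-6a-6b+1$, the same computation gives
\[
(N+1)^2-(S_aS_b+k) = 3(b-a)^2-k .
\]

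Suppose $S_aS_b+k=m^2$ with $m\ge 0$. I will show $N^2<m^2\le (N+1)^2$, so that $m=N+1$, and then read off the constraint.

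\textbf{Upper bound.} For the negative values $k=-1,-4,-9,-12$, and for $k=3,8,12$ whenever $3(b-a)^2>k$, we get $m^2<(N+1)^2$ strictly. Equality $m=N+1$ forces $3(b-a)^2=k$. This is solvable with $b>a$ only for $k=3$ (giving $b=a+1$) and $k=12$ (giving $b=a+2$); for $k=8$ it is impossible. When $3(b-a)^2<k$ we would need $b-a=1$ with $k=8$ or $k=12$. Then $m^2$ lies between $(N+1)^2$ and $(N+1)^2+k$, which is excluded by comparing with $(N+2)^2=(N+1)^2+2N+3$. Since $2N+3$ is huge for $a\ge 2$, $m^2\le (N+1)^2+12<(N+2)^2$ forces $m=N+1$, which is impossible since $3(b-a)^2\ne k$.

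\textbf{Lower bound.} This is the main obstacle: I need $Q(a,b)+k>0$ throughout $a<b<4a+2$, $a\ge 2$. Viewed as a quadratic in $b$, the leading part $-3b^2+18ab-3a^2$ vanishes at $b=(3\pm2\sqrt2)a$, and $(3+2\sqrt2)a\approx 5.83a$ comfortably exceeds $4a+2$. So positivity should follow from evaluating $Q$ at the endpoints $b=a+1$ and $b=4a+1$ and using concavity in $b$. At $b=4a+1$ the leading terms give roughly $3a^2+O(a)$, which beats the linear terms and $k\ge -12$ except for very small $a$.

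Those finitely many small $a$ (say $a\le 3$ or $4$), with $a<b<4a+2$, I will check by hand. There the borderline sign of $Q+k$ for the negative $k$ can allow $m=N$ or $m<N$. I expect exactly the exception $S_2S_9-4=13\cdot 433-4=5625=75^2$ for $k=-4$ to survive, since $m=75=N$ there. Once $N<m\le N+1$ is established, $m=N+1$ together with the equality analysis above yields precisely the cases in the theorem.
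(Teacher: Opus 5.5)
Your proposal is correct and follows essentially the same route as the paper. Both arguments use the identity $S_aS_b+k=(N+1)^2-3(b-a)^2+k$, trap $S_aS_b+k$ strictly between $N^2$ and $(N+1)^2$ by concavity with endpoint checks (the paper writes it in $d=b-a$ and handles $d=1,2$ separately), and hand-check $a=2$, which gives the lone exception $S_2S_9-4=75^2$. One harmless slip: at $b=4a+1$ you have $Q+k=21a^2-36a-8+k$, not roughly $3a^2$, so only $a=2$ actually needs the hand check.
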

In the next theorem, we use the above theorem to get infinitely many $D(k)$-pairs of star numbers excluding $S_1$, where $k=3,8,12,-1,-4,-9,-12$.

\begin{theorem}\label{Theorem 4}
 For $k=3,8,12,-1,-4,-9,-12$, there exist infinitely many $D(k)$-pairs of star numbers excluding $S_1$.  
\end{theorem}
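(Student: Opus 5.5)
The plan is to fix a star number $S_a$ with $a\geq 2$ and show that the equation $S_aS_b+k=m^2$ has infinitely many solutions $(b,m)$ with $b>a$. Write $x=2b-1$, so that $S_b=\tfrac{3x^2-1}{2}$. The condition $S_aS_b+k=m^2$ becomes
\[
3S_a x^2-2m^2=S_a-2k,\qquad\text{equivalently}\qquad (2m)^2-6S_a x^2=2(2k-S_a).
\]
This is a generalized Pell equation $X^2-D Y^2=N$ with $D=6S_a$, $X=2m$ and $Y=x$. We need solutions with $X$ even and $Y$ odd. The natural first choice is $a=2$, i.e. $S_2=13$ and $D=78$. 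The fundamental unit of $\mathbb{Z}[\sqrt{78}]$ is $53+6\sqrt{78}$, since $53^2-78\cdot 6^2=1$.

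\textbf{Seeds.} For each $k\in\{3,8,12,-1,-4,-9,-12\}$ we exhibit one explicit solution $(b_0,m_0)$ with $b_0\geq 2$. For $k=-4$ this is the exceptional pair of Theorem \ref{Theorem 3}: $13\cdot S_9-4=5625=75^2$. For the remaining $k$ we search small $b$ with $13S_b+k$ a perfect square. If some $k$ has no small seed with $a=2$, we switch to another small $a\geq 2$, for instance $a=3$ or $a=4$, which replaces $D$ by $6S_a$ and the unit by the corresponding fundamental unit. Theorem \ref{Theorem 3} helps here: it says any genuine pair other than the listed sporadic ones has $b\geq 4a+2$, so the search for a seed can start at $b=4a+2$. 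Theorem \ref{Theorem 2} already guarantees that solutions exist for each admissible $k$, and a direct check should produce seeds not involving $S_1$.

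\textbf{Propagation.} Multiply $2m+x\sqrt{D}$ by powers of the fundamental unit $u+v\sqrt{D}$. For $D=78$ this gives
\[
x'=53x+12m,\qquad m'=53m+234x.
\]
This map is integral, preserves the norm, and keeps $x'\equiv x \pmod 2$ because $12m$ is even. Hence every iterate still has $x'$ odd, so $b'=(x'+1)/2$ is an integer, and $S_2S_{b'}+k=m'^2$. For a general $a$ the same reasoning requires the unit to have $v$ even (or an appropriate parity condition), and we replace the unit by its square if needed. Since $x'>x>0$ strictly, the indices $b$ strictly increase, giving infinitely many distinct pairs $\{S_a,S_b\}$ with $a\geq 2$. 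By Theorem \ref{Theorem 3} these eventually satisfy $b\geq 4a+2$, so they are distinct from $S_a$ and exclude $S_1$.

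\textbf{Main obstacle.} The main obstacle is producing the seed solutions with $a\geq 2$ for every listed $k$ while keeping the parity constraint ($Y$ odd). Some norm classes may be represented only by even $Y$, or only with $a=1$. In that case I would first try varying $a$ over small values. Failing that, I would try a parametric family in which $b$ is a polynomial in $a$, suggested by the bound $b\geq 4a+2$, for instance $b=4a+c$ for a suitable constant $c$, and verify $S_aS_b+k=\square$ as a polynomial identity.
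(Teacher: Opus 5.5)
Your set-up is the same as the paper's. Lemma 1 gives exactly your equation $X^2-6S_aY^2=4k-2S_a$, and Lemma 2 propagates solutions with the unit $6(2a-1)^2-1+2(2a-1)\sqrt{6S_a}$, which for $a=2$ is your $53+6\sqrt{78}$. The gap is that the proof stops where its content begins: apart from $k=-4$ you never produce a seed, and you defer this to a search you yourself call the main obstacle.

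As planned, that search would also fail in places:
\begin{itemize}
\item For $k=8$, no $b$ works with $a=2$, since $13S_b+8\equiv 8\pmod{13}$ is a non-residue.
\item For $k=8$, no $b$ works with $a=3$ either, since $37S_b+8\equiv 8\pmod{37}$ is a non-residue ($37\equiv 5\pmod 8$).
\item For $k=-1$, no $b$ works with $a=2$: $13S_b-1$ is $\equiv 2$ or $3\pmod 5$ unless $b\equiv 3\pmod 5$, and in that case it is $\equiv 5\pmod{25}$.
\end{itemize}
Your appeal to Theorem \ref{Theorem 2} does not supply seeds, because all of its pairs contain $S_1$.

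The missing idea is that the seed need not have $b_0\ge 2$. A seed with $Y=1$, i.e.\ $b_0=1$, suffices. Every iterate has strictly larger odd $Y$, hence $b\ge 2$. With $a\ge 2$ fixed, the pairs $\{S_a,S_b\}$ then avoid $S_1$, after discarding at most one iterate with $b=a$. This, not Theorem \ref{Theorem 3}, is what excludes $S_1$.

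So you only need $a\ge 2$ with $S_a+k$ a perfect square, and these are immediate: $S_4+8=81$, $S_3-1=36$, $S_2-4=9$, $S_2-9=4$, $S_2-12=1$. They give the seeds $(18,1)$, $(12,1)$, $(6,1)$, $(4,1)$, $(2,1)$ used in the paper. Likewise $S_2+3=16$ and $S_2+12=25$ would work.

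For $k=3$ and $k=12$ the paper instead uses the identities $S_aS_{a+1}+3=(6a^2-2)^2$ and $S_aS_{a+2}+12=(6a^2+6a-5)^2$. These are precisely the exceptional cases $b=a+1$ and $b=a+2$ of Theorem \ref{Theorem 3}. Your fallback of a parametric family was the right instinct, but $b=4a+c$ is the wrong place to look.

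Finally, your parity worry about the unit is unnecessary. Since $D=6S_a$ is even, $u$ is odd, and since $X$ is even, $Y'=vX+uY\equiv Y\pmod 2$ for every unit $u+v\sqrt{D}$.
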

Our next result establishes the non-existence of Diophantine $D(k)$-triples containing star numbers for certain admissible integers $k\neq0$.
\begin{theorem}\label{Theorem 5}
If $k\equiv 12,20 \pmod{24}$ or $k\equiv 23,32 \pmod{36}$, then $\{S_a,S_b,S_c\}$ cannot be a $D(k)$-triple.
\end{theorem}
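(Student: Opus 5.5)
The plan is a pure congruence argument. First I would determine the residues of star numbers modulo $24$ and modulo $36$, then show that for the listed $k$ the condition $S_aS_b+k=\square$ forces $S_a$ and $S_b$ into different residue classes, which is impossible for all three pairs of a triple.

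\emph{Residues of $S_n$.} Write $S_n=1+12T_{n-1}$. Modulo $24$ this gives $S_n\equiv 1$ or $13 \pmod{24}$, according as $T_{n-1}$ is even or odd. Modulo $36$, checking $n \bmod 6$ shows $n(n-1)\equiv 0$ or $2\pmod 6$, hence $S_n\equiv 1$ or $13\pmod{36}$. So every star number lies in one of two classes, and I label these class $A$ (residue $1$) and class $B$ (residue $13$).

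\emph{Which products can be shifted to a square.} The products of two star numbers satisfy
\[
S_aS_b\equiv\begin{cases}1,13 \pmod{24}, & \text{namely } AA,BB\mapsto 1,\ AB\mapsto 13,\\ 1,13,25 \pmod{36}, & \text{namely } AA\mapsto1,\ AB\mapsto 13,\ BB\mapsto 25.\end{cases}
\]
The squares are $\{0,1,4,9,12,16\}$ modulo $24$ and $\{0,1,4,9,13,16,25,28\}$ modulo $36$. I then check the four cases of $k$ one by one:
\begin{itemize}
\item $k\equiv 12\pmod{24}$: $1+12=13$ is not a square residue, while $13+12\equiv 1$ is.
\item $k\equiv 20\pmod{24}$: $1+20=21$ is not a square residue, while $13+20\equiv 9$ is.
\item $k\equiv 23\pmod{36}$: $1+23=24$ and $25+23\equiv 12$ are not square residues, while $13+23\equiv 0$ is.
\item $k\equiv 32\pmod{36}$: $1+32=33$ and $25+32\equiv 21$ are not square residues, while $13+32\equiv 9$ is.
\end{itemize}
In every case, $S_aS_b+k$ can be a square only if the product has residue $13$. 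That happens exactly when one of $S_a,S_b$ is in class $A$ and the other is in class $B$.

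\emph{Conclusion.} If $\{S_a,S_b,S_c\}$ were a $D(k)$-triple, each of the three pairs would have to consist of elements from different classes. With only two classes available, the pigeonhole principle puts two of the three elements in the same class, a contradiction.

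There is no real obstacle here. The only care needed is to compute the square residues modulo $36$ correctly, in particular $13^2=169\equiv 25$ and $7^2\equiv 13$.
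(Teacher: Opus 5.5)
Your proof is correct and follows essentially the paper's argument. The paper reduces to $k\equiv 4\pmod 8$ and $k\equiv 5\pmod 9$, uses that $S_n$ takes only the two residues $1,5$ modulo $8$ and $1,4$ modulo $9$, and finishes with the same pigeonhole step; your two classes modulo $24$ and $36$ are exactly these classes lifted by the Chinese remainder theorem, since every star number is $\equiv 1$ modulo $3$ and modulo $4$ and those factors add nothing.
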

As an immediate consequence, we have the following corollary.
\begin{corollary}\label{Corollary 1}
For $k=-12,-4,12$, there are no integers $a,b,c$ such that the set $\{S_a,S_b,S_c\}$ is a $D(k)$-triple.    
\end{corollary}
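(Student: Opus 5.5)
The plan is to deduce the corollary directly from Theorem~\ref{Theorem 5}. That theorem rules out a $D(k)$-triple $\{S_a,S_b,S_c\}$ whenever $k\equiv 12,20\pmod{24}$ or $k\equiv 23,32\pmod{36}$. So it suffices to check that each of $k=-12,-4,12$ lies in one of these residue classes.

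The check is a one-line computation for each value:
\[
-12\equiv 12 \pmod{24},\qquad 12\equiv 12\pmod{24},\qquad -4\equiv 20\pmod{24}.
\]
Each of the three values therefore falls in the first family of Theorem~\ref{Theorem 5}. Applying the theorem gives, for each such $k$, that no choice of indices $a,b,c$ makes $\{S_a,S_b,S_c\}$ a $D(k)$-triple, which is exactly the claim. Corollary~\ref{Corollary 1} also does not require the indices to be ordered or to avoid $S_1$; this matches Theorem~\ref{Theorem 5}, which is stated for arbitrary $a,b,c$, so nothing needs adjusting.

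There is essentially no obstacle here; the work lies entirely in Theorem~\ref{Theorem 5}, which we may assume. The only point needing care is the sign convention for negative $k$: we reduce to the least nonnegative residue, so that $-12$ and $-4$ are correctly placed in the classes $12$ and $20$ modulo $24$.

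As a sanity check on why the mod-$24$ condition is the relevant one, I would note how the argument behind Theorem~\ref{Theorem 5} plausibly goes. Since $n(n-1)$ is even, $S_n\equiv 1$ or $13\pmod{24}$. Among three star numbers, two of the pairwise products $S_iS_j$ must therefore be congruent to $1$ modulo $24$, or else all three products are $\equiv 13\pmod{24}$. Combining this with the fact that squares modulo $24$ lie in $\{0,1,4,9,12,16\}$ should exclude $S_iS_j+k$ being a square for $k\equiv 12,20\pmod{24}$. This reasoning is not needed for the corollary itself, which follows from the theorem as stated.
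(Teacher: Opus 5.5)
Your proposal is correct and is exactly the paper's argument: the corollary follows from Theorem~\ref{Theorem 5} by checking that $\pm 12\equiv 12$ and $-4\equiv 20 \pmod{24}$. One slip is in your optional side remark. Since $13^2\equiv 1\pmod{24}$, the three products can never all be $\equiv 13\pmod{24}$. Rather, two of the $S$'s agree modulo $24$, so at least one product $S_iS_j$ is $\equiv 1$. For that pair, $S_iS_j+k\equiv 13$ or $21\pmod{24}$, which is not a square, and this is what drives the argument (the paper runs the same pigeonhole modulo $8$).
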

\begin{remark}
For $k=12$, Corollary \ref{Corollary 1} supports Conjecture \ref{con 1}.
\end{remark}
In view of the computation evidence, we strongly believe that the following conjecture is true: 
\begin{conjecture}\label{con 2}
For $k=3,8,-9,-1$, there exists no $D(k)$-triple of the form $\{S_a,S_b,S_c\}$.   
\end{conjecture}
\begin{proposition}
For $k=3$, Conjecture \ref{con 1} implies Conjecture \ref{con 2}.    
\end{proposition}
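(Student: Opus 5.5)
The plan is a direct reduction: start from a hypothetical $D(3)$-triple $\{S_a,S_b,S_c\}$ and build an explicit $D(3)$-quadruple, which contradicts Conjecture \ref{con 1} for $k=3$. I have not found the fourth element, so only the bookkeeping steps are concrete.

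\textbf{Step 1 (notation and congruences).} Write
$$S_aS_b+3=r^2,\qquad S_aS_c+3=s^2,\qquad S_bS_c+3=t^2.$$
Since $S_n\equiv 1 \pmod{12}$, each product plus $3$ is $\equiv 4\pmod{12}$. Hence $r,s,t$ are even, prime to $3$, and $r^2\equiv s^2\equiv t^2\equiv 4\pmod{12}$. These residues (modulo $4$, $3$, and later $8$ and $9$) are the only structure available beyond the triple property itself.

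\textbf{Step 2 (scaled regular extension).} The natural candidate is the scaled regular extension. Put $a'=S_a/\sqrt3$, and similarly $b',c'$, so that $\{a',b',c'\}$ formally satisfies the $D(1)$ relations. Transporting the classical $D(1)$ extension back gives
$$d_\pm=S_a+S_b+S_c+\frac{2S_aS_bS_c\pm 2rst}{3},$$
with the identities
$$S_a d_\pm+3=\frac{(S_at\pm rs)^2}{3}$$
and their analogues for $S_b$ and $S_c$. Because $rst\not\equiv0\pmod3$ and $S_aS_bS_c\equiv1\pmod3$, one sign makes $d_\pm$ an integer.

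This candidate fails as it stands. Integrality forces $3\mid S_at\pm rs$, so $S_ad_\pm+3=3u^2$, which is $3$ times a square rather than a square. I would therefore treat $d_\pm$ only as a first attempt.

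\textbf{Step 3 (modified ansatz).} I would look for a fourth element $d$ of a different shape that exploits the star structure $S_n=1+12T_{n-1}$, for instance $d$ built from $S_a+S_b+S_c$ and $rst$ with other coefficients. Two further routes to test:
\begin{itemize}
\item pass to the equivalent $D(12)$ problem through $\{2S_a,2S_b,2S_c\}$, since $4(S_aS_b+3)=(2r)^2$, and attempt a regular extension there;
\item check whether $d=1$ or a small fixed element works whenever $S_n+3$ happens to be a square.
\end{itemize}
Once a candidate $d$ is found, the verification is routine. Expand $S_ad+3$, $S_bd+3$ and $S_cd+3$ using the three defining equations, and confirm that each is a perfect square and that $d$ differs from the $S$'s.

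\textbf{Main obstacle.} The crux is constructing a fourth element that really gives a $D(3)$ relation, not a $D(9)$ or $3\cdot\square$ relation. Step 2 shows the naive construction breaks exactly here. I expect the fix to come from the extra $2$-adic and $3$-adic information in Step 1, namely $r,s,t$ even and all $S$'s $\equiv1\pmod{12}$, which should let the factor $3$ be absorbed. If it does, Conjecture \ref{con 1} for $k=3$ rules out the resulting quadruple, which proves the proposition.
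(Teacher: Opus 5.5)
Your proposal has a genuine gap: it never produces the fourth element, and that element is the whole proof. The missing observation is that the number $6$ extends every star number. From $S_n=\frac{3(2n-1)^2-1}{2}$, the identity behind Lemma \ref{Lemma 1}, one gets
$$6S_n+3=9(2n-1)^2-3+3=\bigl(3(2n-1)\bigr)^2$$
for every $n\ge 1$. So if $\{S_a,S_b,S_c\}$ is a $D(3)$-triple, then $\{6,S_a,S_b,S_c\}$ is a $D(3)$-quadruple:
\begin{itemize}
\item the three products among the $S$'s plus $3$ are squares by hypothesis;
\item the three products involving $6$ plus $3$ are squares unconditionally;
\item $6$ is not a star number (the star numbers begin $1,13,37,73,\dots$), so the four elements are distinct.
\end{itemize}
This contradicts Conjecture \ref{con 1} for $k=3$, which is exactly the paper's argument. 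Your bullet about trying ``a small fixed element'' pointed the right way. However, you only tried $d=1$, and only under the extra condition that $S_n+3$ be a square. With $d=6$ no condition on the triple is needed.

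Your Step 2 computation is correct: the scaled regular extension gives $S_ad_\pm+3=(S_at\pm rs)^2/3$, i.e.\ $3u^2$ rather than a square. But it is a dead end. Your ``Main obstacle'' expectation, that the factor $3$ must be absorbed using $2$-adic and $3$-adic information about $r,s,t$, is also misguided, because the fourth element does not depend on the triple at all. Steps 1 and 2 can be discarded entirely. As written, the proposal does not prove the proposition; adding the identity $6S_n+3=(3(2n-1))^2$ and the check that $6$ is not a star number completes it in two lines.
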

\begin{proof}
 If possible, assume that Conjecture \ref{con 1} holds, but Conjecture \ref{con 2} is false for $k=3$. Then, there exist integers $a,b,c\geq1$ such that the set $\{S_a,S_b,S_c\}$ is a $D(3)$-triple. Since $6$ is not a star number, it is easy to check that $\{6,S_a,S_b,S_c\}$ is a $D(3)$-quadruple, but this contradicts our assumption on Conjecture \ref{con 1}. This completes the proof.  
\end{proof}
However, there are several values of $k$ for which $\{S_a,S_b,S_c\}$ is a $D(k)$-triple, for example, for $(a,b,c,k)\in \{(1, 2, 57, 1011),(1, 3, 126, 363),(1 ,6 ,65 ,1935), (1 ,10, 362 ,903)\}$.

From Theorem \ref{Theorem 3}, it is easy to prove that for $a\geq2$, $\{S_a,S_{a+1}\}$, $\{S_a,S_{a+2}\}$ are $D(3)$-pair and $D(12)$-pair, respectively (see proof of Theorem \ref{Theorem 4}). Corollary \ref{Corollary 1} proves that $\{S_a,S_{a+2}\}$ cannot be extended to $D(12)$-triple. The following theorem verifies a special case of Conjecture \ref{con 2}.
\begin{theorem}\label{Theorem 6}
Let $a>b\geq1$ are integers. Then $\{S_b,S_a,S_{a+1}\}$ cannot be a $D(3)$-triple. In particular, $\{S_{a-1},S_a,S_{a+1}\}$ cannot be a $D(3)$-triple.
\end{theorem}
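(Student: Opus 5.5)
The plan is to exploit the fact that $\{S_a,S_{a+1}\}$ is itself a very rigid $D(3)$-pair, and then use the lower bound of Theorem \ref{Theorem 3} to force a size contradiction. First, a direct expansion gives
$$S_aS_{a+1}+3=(6a^2+1)^2-36a^2+3=(6a^2-2)^2 .$$
Put $s=S_b$ and suppose $sS_a+3=x^2$ and $sS_{a+1}+3=y^2$ with $x,y>0$. Multiplying the two relations and using $S_aS_{a+1}=(6a^2-2)^2-3$ and $S_a+S_{a+1}=12a^2+2$, I will compare $(xy)^2$ with the square of $T:=s(6a^2-2)+3$. The expected identity (a routine expansion) is
$$(xy)^2=T^2-3s(s-6).$$

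\textbf{Case $b=1$.} Here $s=1$ and $(xy)^2=T^2+15$. Writing $xy=T+t$ with $t\ge1$ gives $2tT+t^2=15$, so $T\le 7$. But $T=6a^2+1\ge 25$ because $a\ge2$, a contradiction.

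\textbf{Case $b\ge2$.} Now $s\ge 13>6$, so $(xy)^2<T^2$. Write $xy=T-t$ with $1\le t<T$. Since $2tT-t^2$ is increasing on this range, we obtain
$$3s(s-6)=2tT-t^2\ \ge\ 2T-1=2s(6a^2-2)+5 .$$
This yields roughly $s\ge 4a^2$; more precisely $3(s-6)>12a^2-4$, that is, $s>4a^2+14/3$.

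To contradict this lower bound on $s$, I invoke Theorem \ref{Theorem 3}(1) for the two pairs.
\begin{itemize}
\item The pair $\{S_b,S_a\}$ with $2\le b<a$ forces $a=b+1$ or $a\ge 4b+2$.
\item The pair $\{S_b,S_{a+1}\}$ forces $a+1=b+1$ or $a+1\ge 4b+2$. The first option is impossible since $b<a$, so $a\ge 4b+1$.
\end{itemize}
The option $a=b+1$ would then give $b+1\ge 4b+1$, which is impossible. Hence $b\le (a-1)/4$ in all cases. Consequently
$$s=S_b=6b(b-1)+1<6b^2\le \tfrac{3}{8}(a-1)^2<4a^2,$$
which contradicts $s>4a^2+14/3$. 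The particular statement for $\{S_{a-1},S_a,S_{a+1}\}$ is the special case $b=a-1$ (already excluded, since $a=b+1$ is impossible).

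The main point of care is verifying the identity $(xy)^2=T^2-3s(s-6)$ exactly, since the whole argument hinges on the gap being only quadratic in $s$ and independent of $a$. The step that actually needs an outside input is that the crude bound $s<S_a\approx 6a^2$ is not enough, because $6a^2>4a^2$. This is why Theorem \ref{Theorem 3} is essential to bring $s$ down to about $\tfrac38 a^2$.
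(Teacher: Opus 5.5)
Your proof is correct, but it takes a different route from the paper's.

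The paper treats the two conditions separately, as solutions $(X_1,Y_1)=(2c,2a-1)$ and $(X_2,Y_2)=(2d,2a+1)$ of one generalized Pell equation: $X^2-6Y^2=10$ if $b=1$, and $X^2-DY^2=-R$ with $D=9(2b-1)^2-3$ if $b\ge 2$. Since $Y_2-Y_1=2$ and each $X_i$ lies very close to $\sqrt{D}\,Y_i$, it pins down $X_2-X_1$ using parity (to $4$, resp.\ $6(2b-1)$). This forces $a=1$, resp.\ $X_1$ odd, a contradiction. In the paper, Theorem~\ref{Theorem 3} is used only to discard $a=b+1$, via the easy fact that $\{S_b,S_{b+2}\}$ is not a $D(3)$-pair; after that, $a\ge b+2$ is all the estimates need.

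You instead multiply the two conditions and use that $\{S_a,S_{a+1}\}$ is itself a $D(3)$-pair with $r=6a^2-2$. Your identity is the standard $(sA+3)(sB+3)=(sr+3)^2+3s(A+B-2r-s)$ with $A+B-2r=6$. This treats $b=1$ and $b\ge2$ uniformly. It also explains structurally why $s=6$ is the regular extension of every such pair (note $6S_n+3=(6n-3)^2$, the fact behind the paper's remark on $\{6,S_a,S_b,S_c\}$), while any $s>6$ is forced to exceed $4a^2+14/3$.

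The price is that you need the full strength of Theorem~\ref{Theorem 3}(1): the exclusion of the whole range $b+2\le a+1\le 4b+1$, including the concavity argument and the direct check for $S_2$. This is because the trivial bound $S_b\le S_{a-1}$ no longer beats $4a^2+14/3$ once $a\ge 9$. So your argument is shorter and more conceptual, whereas the paper's is more self-contained.

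A minor point: your first bullet is redundant, since the pair $\{S_b,S_{a+1}\}$ alone already gives $a\ge 4b+1$.
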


On the other hand, an analogous problem involving classical Diophantine tuples has also been studied extensively in the setting of triangular numbers. The definition of a triangular Diophantine $D(k)$-tuple is given below.
\begin{definition}
Let $k$ be a nonzero integer. A set of $m$ distinct positive integers is called a triangular Diophantine $D(k)$ m-tuple if the product of any two of its distinct elements increased by $k$ is a triangular number.
\end{definition}
In this set up, Filipin and Szalay \cite{tri1} studied triangular Diophantine tuples containing $\{1,2\}$. They proved that no triangular $D(1)$-quadruple of the form $\{1,2,a,b\}$ exists. Filipin, Jurasi\'c, and Szalay \cite{tri2} studied the extension of the triangular $D(3)$-pair $\{3,6\}$ and proved that it cannot be extended to a quadruple of the form $\{3,6,a,b\}$. Later, Filipin, Jurasi\'c, and Togb\'e \cite{tri4} studied the triangular $D(k)$-pairs $\{k,2k\}$ for $k=10$ and $k=28$. They proved that neither pair can be extended to a triangular quadruple of the form $\{k,2k,a,b\}$ for these values of $k$. Further developments can be found in \cite{tri3,tri5}.

Motivated by these developments in the study of triangular Diophantine $D(k)$-tuples, we consider the following analogous problem involving star numbers.
\begin{definition}
Let $k$ be a nonzero integer. A set of $m$ distinct positive integers is called a star Diophantine $D(k)$ m-tuple if the product of any two of its distinct elements increased by $k$ is a star number. 
\end{definition}
In this direction, our main theorems are stated below.
\begin{theorem}\label{Theorem a+1}
For all
$$a\in\bigg\{ \frac{5\xi_{1,k-1}+12\xi_{2,k-1}-1}{2}:k\geq 1,~~\xi_{1,0}=17,\xi_{2,0}=7\bigg\},$$
where
$$\xi_{1,k}=5\xi_{1,k-1}+12\xi_{2,k-1},\quad \xi_{2,k}=2\xi_{1,k-1}+5\xi_{2,k-1},$$ 
the set
$\{a,a+1,12(2a+1)\}$  is a star Diophantine $D(1)$-triple. Further, if $12\nmid b$, then $\{a,a+1,12(2a+1),b\}$ cannot be a star Diophantine $D(1)$-quadruple.
\end{theorem}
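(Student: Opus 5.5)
The plan is to turn ``being a star number'' into a square condition, and then read off both parts of Theorem \ref{Theorem a+1} from that criterion. Completing the square in $x=6n(n-1)+1$ gives
$$6x+3=(6n-3)^2=9(2n-1)^2 .$$
So a positive integer $x$ is a star number if and only if $2x+1=3w^2$ for some odd positive integer $w$. I will use this criterion throughout, checking each pairwise product plus one against it.

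\emph{The triple.} For the two products involving $12(2a+1)$ the criterion is automatic:
$$2\bigl(12a(2a+1)+1\bigr)+1=3(4a+1)^2,\qquad 2\bigl(12(a+1)(2a+1)+1\bigr)+1=3(4a+3)^2 .$$
Here $4a+1$ and $4a+3$ are odd. Hence for every $a\ge1$ the products $12a(2a+1)+1$ and $12(a+1)(2a+1)+1$ are star numbers. The only genuine condition is on $a(a+1)+1$. By the criterion it is a star number if and only if $2a^2+2a+3=3m^2$ with $m$ odd. Putting $X=2a+1$, this becomes the generalized Pell equation
$$X^2-6m^2=-5,\qquad X,\ m \text{ odd}.$$
The seed $(17,7)$ solves it, since $289-294=-5$. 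The map $(\xi_1,\xi_2)\mapsto(5\xi_1+12\xi_2,\,2\xi_1+5\xi_2)$ is multiplication by the unit $5+2\sqrt6$ of norm $1$, so it preserves $\xi_1^2-6\xi_2^2$. It also preserves the parity of both coordinates: $5\xi_1+12\xi_2\equiv\xi_1$ and $2\xi_1+5\xi_2\equiv\xi_2 \pmod 2$. By induction every $(\xi_{1,k},\xi_{2,k})$ is a solution with both entries odd. Therefore $a=(\xi_{1,k}-1)/2=(5\xi_{1,k-1}+12\xi_{2,k-1}-1)/2$ is a positive integer, $a(a+1)+1$ is star, and the three elements are distinct. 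This gives infinitely many star Diophantine $D(1)$-triples.

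\emph{Non-extension.} Suppose $\{a,a+1,12(2a+1),b\}$ is a star Diophantine $D(1)$-quadruple. I only need the conditions coming from $a$ and $a+1$. Since $ab+1$ is star, the criterion gives $2ab+3=3w^2$ with $w$ odd, so
$$2ab=3(w^2-1).$$
As $w$ is odd, $8\mid w^2-1$, hence $24\mid 2ab$, that is, $12\mid ab$. In the same way, $(a+1)b+1$ being star forces $12\mid (a+1)b$. Subtracting gives $12\mid (a+1)b-ab=b$. This contradicts $12\nmid b$ and proves the second assertion.

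I expect no serious obstacle. The only point needing care is setting up the criterion correctly: the factor $3$ and the oddness of $w$ are exactly what produce divisibility by $12$ rather than only by $3$. The condition from $12(2a+1)b+1$ is not needed at all.
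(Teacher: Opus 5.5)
Your proposal is correct and follows essentially the paper's route. The criterion $2x+1=3w^2$ with $w$ odd is the content of Lemma~\ref{Lemma 3}, and the two products with $12(2a+1)$ are $S_{2a+1}$ and $S_{2a+2}$. The remaining condition is the Pell equation $X^2-6Y^2=-5$ generated from $(17,7)$ by $5+2\sqrt6$, and non-extension follows from $12\mid ab$ and $12\mid (a+1)b$. Your explicit check that the recurrence keeps both coordinates odd, so that $a$ and the star index are integers, is a detail the paper leaves implicit.
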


\begin{theorem}\label{Theorem a+2} For all
$$a\in \bigg\{\frac{5\xi_{1,k-1}+12\xi_{2,k-1}-2}{2}:k\geq1,~~\xi_{1,0}=22,\xi_{2,0}=9\bigg\},$$
where 
$$\xi_{1,k}=5\xi_{1,k-1}+12\xi_{2,k-1},\quad \xi_{2,k}=2\xi_{1,k-1}+5\xi_{2,k-1},$$
the set $\{a,a+2,6(a+1)\}$  is a star Diophantine $D(1)$-triple. Further, if $6\nmid b$, then $\{a,a+2,6(a+1),b\}$ cannot be a star Diophantine $D(1)$-quadruple.
\end{theorem}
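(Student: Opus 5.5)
The plan is to reduce both parts to congruence conditions, using the characterization of star numbers: $N=S_n$ for some $n\ge1$ if and only if $6N+3=(6n-3)^2=9(2n-1)^2$, i.e. $2N+1=3y^2$ with $y=2n-1$ a positive odd integer. We also use $S_n=1+12T_{n-1}\equiv 1\pmod{12}$.

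\emph{Step 1: two of the three products are automatically star numbers.} We have
$$a\cdot 6(a+1)+1=6(a+1)a+1=S_{a+1},\qquad (a+2)\cdot 6(a+1)+1=6(a+2)(a+1)+1=S_{a+2}.$$
So these two products plus $1$ are star numbers for every $a\ge1$. The only nontrivial condition is that $a(a+2)+1=(a+1)^2$ be a star number. By the characterization above, this is equivalent to
$$3y^2-2(a+1)^2=1 \quad\text{with } y \text{ odd and positive}.$$
Writing $X=2(a+1)$, this becomes $X^2-6y^2=-2$ with $X$ even and $y$ odd.

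\emph{Step 2: the recurrence produces solutions.} The starting pair $(\xi_{1,0},\xi_{2,0})=(22,9)$ satisfies $22^2-6\cdot 9^2=484-486=-2$. The map $(X,y)\mapsto(5X+12y,\,2X+5y)$ is multiplication by the unit $5+2\sqrt6$ of norm $1$, so it preserves $X^2-6y^2$. It also preserves the parities: $5X+12y$ stays even and $2X+5y$ stays odd, and positivity is clear. Hence every $\xi_{1,k}=5\xi_{1,k-1}+12\xi_{2,k-1}$ is an admissible value of $X$. Setting $a=(\xi_{1,k}-2)/2$ gives a positive integer with $(a+1)^2=S_n$, where $n=(\xi_{2,k}+1)/2$. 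The three elements are distinct since $a<a+2<6(a+1)$. This proves the triple statement.

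\emph{Step 3: non-extension.} Suppose $\{a,a+2,6(a+1),b\}$ is a star $D(1)$-quadruple. Since each star number is $\equiv1\pmod{12}$, we get
$$12\mid ab,\qquad 12\mid (a+2)b,\qquad 2\mid (a+1)b.$$
From $3y^2-2(a+1)^2=1$ we extract the needed congruences on $a$:
\begin{itemize}
\item Modulo $8$: since $y$ is odd, $3y^2\equiv3$, so $2(a+1)^2\equiv 2\pmod 8$. Thus $a+1$ is odd, i.e. $a$ is even, and $2\mid(a+1)b$ forces $2\mid b$.
\item Modulo $3$: $-2(a+1)^2\equiv1$ gives $(a+1)^2\equiv1$, so $3\nmid a+1$. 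Then exactly one of $a$, $a+2$ is divisible by $3$ (if $a\equiv0$ then $a+2\equiv2$; if $a\equiv1$ then $a+2\equiv0$). Applying $3\mid ab$ and $3\mid(a+2)b$ to the one of $a,a+2$ that is not divisible by $3$ yields $3\mid b$.
\end{itemize}
Therefore $6\mid b$, which contradicts the hypothesis $6\nmid b$.

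The only step needing care is the mod-$8$ and mod-$3$ analysis of the Pell-type equation, since it supplies the parity of $a$ and the exclusion $3\nmid a+1$. Everything else is direct verification.
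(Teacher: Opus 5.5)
Your proof is correct. The triple part follows the paper's route. The paper notes $6a(a+1)+1=S_{a+1}$ and $6(a+2)(a+1)+1=S_{a+2}$, reduces $a(a+2)+1=S_n$ via its Lemma 3 to $X^2-6Y^2=-2$ with $X=2a+2$, $Y=2n-1$, and propagates $(22,9)$ by the unit $5+2\sqrt6$ (its Lemma 4). You reprove those lemmas inline, and you also check parity preservation and distinctness, which the paper leaves implicit.

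The non-extension part is where your route differs. You combine $12\mid ab$, $12\mid (a+2)b$ and $2\mid(a+1)b$ with facts about $a$ taken from the Pell equation, namely that $a+1$ is odd and $3\nmid a+1$. The paper's terse argument needs none of this. From $ab\equiv(a+2)b\equiv 0\pmod{12}$, subtracting gives $12\mid 2b$, so $6\mid b$ at once. That subtraction uses neither the arithmetic of $a$ nor the third element $6(a+1)$. It therefore shows more generally that for every $a$, if $ab+1$ and $(a+2)b+1$ are both star numbers then $6\mid b$. It is also the template behind the companion theorems, where the differences $b$, $3b$, $4b$ give $12\mid b$, $4\mid b$, $3\mid b$.

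Your version is valid but does extra work. Incidentally, your facts that $a+1$ is odd and $3\nmid a+1$ follow immediately from $(a+1)^2=S_n\equiv 1\pmod{12}$, without passing through the Pell equation modulo $8$ and $3$.
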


\begin{theorem}\label{Theorem a+3} For all
$$a\in \bigg\{\frac{5\xi_{1,k-1}+12\xi_{2,k-1}-3}{2}:k\geq1,~~\xi_{1,0}=27,\xi_{2,0}=11\bigg\},$$
where 
$$\xi_{1,k}=5\xi_{1,k-1}+12\xi_{2,k-1},\quad \xi_{2,k}=2\xi_{1,k-1}+5\xi_{2,k-1},$$
the set $\{a,a+3,\dfrac{8a+12}{3}\}$ is a star Diophantine $D(1)$-triple. Further, if $4\nmid b$, then $\{a,a+3,\dfrac{8a+12}{3},b\}$ cannot be a star Diophantine $D(1)$-quadruple.
\end{theorem}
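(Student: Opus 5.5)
The plan rests on a reformulation of the star condition. Since $S_n=6n^2-6n+1$, we have $2S_n+1=3(2n-1)^2$. Hence a positive integer $N$ is a star number if and only if $2N+1=3m^2$ for some odd positive integer $m$, in which case $N=S_{(m+1)/2}$. Also $S_n\equiv 1\pmod{12}$, because $6n(n-1)$ is divisible by $12$. So a set is a star Diophantine $D(1)$-tuple exactly when, for every pair $x,y$ of its elements, $2xy+3=3m^2$ with $m$ odd. In particular every such pair satisfies $12\mid xy$.

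\textbf{Step 1: $\{a,a+3\}$ via a Pell equation.} The condition $2a(a+3)+3=3m^2$ is, after multiplying by $2$,
\[
(2a+3)^2-6m^2=3 .
\]
This is the generalized Pell equation $X^2-6Y^2=3$. Its solution $(27,11)$ satisfies $27^2-6\cdot 11^2=3$, and the recursion for $(\xi_{1,k},\xi_{2,k})$ is multiplication by the unit $5+2\sqrt6$. Therefore every $(\xi_{1,k},\xi_{2,k})$ is again a solution.

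For $a=(5\xi_{1,k-1}+12\xi_{2,k-1}-3)/2$ we have $2a+3=\xi_{1,k}$, so we may take $m=\xi_{2,k}$. I will check the following parities and congruences.
\begin{itemize}
\item $\xi_{2,k}\equiv \xi_{2,k-1}\pmod 2$, so every $\xi_{2,k}$ is odd and $m$ is odd.
\item $\xi_{1,k}\equiv \xi_{1,k-1}\pmod 2$, so every $\xi_{1,k}$ is odd and $a$ is an integer.
\item $\xi_{1,k}\equiv 2\xi_{1,k-1}\pmod 3$ and $3\mid 27$, so $3\mid \xi_{1,k}=2a+3$. Hence $3\mid a$, say $a=3t$, and $c=(8a+12)/3=8t+4$ is a positive integer.
\end{itemize}

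\textbf{Step 2: the pairs involving $c$.} These reduce to perfect-square identities:
\[
2ac+3=\frac{16a^2+24a+9}{3}=3\Big(\frac{4a+3}{3}\Big)^2=3(4t+1)^2,
\]
\[
2(a+3)c+3=\frac{16a^2+72a+81}{3}=3(4t+3)^2 .
\]
Both $4t+1$ and $4t+3$ are odd, so $ac+1=S_{2t+1}$ and $(a+3)c+1=S_{2t+2}$. The three elements are distinct since $c=8t+4>3t+3$. This proves that $\{a,a+3,c\}$ is a star Diophantine $D(1)$-triple.

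\textbf{Step 3: non-extension.} Suppose $\{a,a+3,c,b\}$ is a star Diophantine $D(1)$-quadruple. By the congruence above, $ab\equiv 0\pmod{12}$ and $(a+3)b\equiv 0\pmod{12}$. Exactly one of $a$ and $a+3$ is odd; call it $u$. From $12\mid ub$ we get $4\mid ub$, and since $u$ is odd this forces $4\mid b$. This contradicts the hypothesis $4\nmid b$, which finishes the proof.

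The only step needing care is the bookkeeping in Step 1. One must check that the indexing (using $\xi_{\cdot,k-1}$ in the formula for $a$) makes $2a+3$ equal to $\xi_{1,k}$ itself, not some other solution of the Pell equation. One must also verify that the parity and mod $3$ invariants propagate from the seed $(27,11)$. The remaining steps are direct computations.
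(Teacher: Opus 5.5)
Your proof is correct and follows essentially the same route as the paper. Both arguments use the Pell equation $X^2-6Y^2=3$ seeded by $(27,11)$ and propagated by the unit $5+2\sqrt6$, then verify directly that the pairs involving $c=(8a+12)/3$ give the star numbers $S_{(2a+3)/3}$ and $S_{(2a+6)/3}$, and finally use $S_n\equiv 1\pmod{12}$ to force $4\mid b$. You are more careful than the paper in checking that $\xi_{2,k}$ stays odd and that $3\mid\xi_{1,k}$ propagates. You also explain why $12\mid ub$, with $u$ the odd element of $\{a,a+3\}$, yields $4\mid b$, which the paper leaves implicit.
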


\begin{theorem}\label{Theorem a+4} For all
$$a\in \bigg\{\frac{5\xi_{1,k-1}+12\xi_{2,k-1}-4}{2}: k\geq 1,~~\xi_{1,0}=8,\xi_{2,0}=3\bigg\},$$
where  
$$\xi_{1,k}=5\xi_{1,k-1}+12\xi_{2,k-1},\quad \xi_{2,k}=2\xi_{1,k-1}+5\xi_{2,k-1},$$
the set $\{a,a+4,\dfrac{3a+6}{2}\}$ is a star Diophantine $D(1)$-triple. Further, if $3\nmid b$, then $\{a,a+4,\dfrac{3a+6}{2},b\}$ cannot be a star Diophantine $D(1)$-quadruple.
\end{theorem}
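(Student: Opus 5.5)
The plan rests on a simple characterization of star numbers. Since $S_n=6n(n-1)+1=\frac{3(2n-1)^2-1}{2}$, a positive integer $N$ is a star number if and only if $2N+1=3x^2$ for some odd positive integer $x$, with $x=2n-1$. I will first verify the triple property through this criterion and then prove the non-extension by a divisibility argument modulo $3$.

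\emph{Step 1: the pair $\{a,a+4\}$.} The condition that $a(a+4)+1$ is a star number reads $2a^2+8a+3=3x^2$ with $x$ odd. Putting $X=2a+4$, this is $2(a+2)^2-5=3x^2$, i.e. the generalized Pell equation
$$X^2-6x^2=10.$$
The initial pair $(\xi_{1,0},\xi_{2,0})=(8,3)$ is a solution, since $64-54=10$. The map $(X,Y)\mapsto(5X+12Y,\,2X+5Y)$ is the automorphism of $X^2-6Y^2$ coming from the unit $5+2\sqrt6$, which has norm $1$. Hence every $(\xi_{1,k},\xi_{2,k})$ solves the equation. Moreover $a=(\xi_{1,k}-4)/2$ is exactly the element listed in the statement, because $\xi_{1,k}=5\xi_{1,k-1}+12\xi_{2,k-1}$.

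For the parity conditions I will check, by reducing the recurrences, that $\xi_{2,k}\equiv\xi_{2,k-1}\pmod 2$ and $\xi_{1,k}\equiv\xi_{1,k-1}\pmod 4$. Starting from $\xi_{2,0}=3$ and $\xi_{1,0}=8$, this gives $\xi_{2,k}$ odd, so $x=\xi_{2,k}$ is admissible. It also gives $4\mid\xi_{1,k}$ for all $k$.

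\emph{Step 2: the third element.} Since $a+2=\xi_{1,k}/2$, we have $c:=\frac{3a+6}{2}=\frac{3\xi_{1,k}}{4}$, which is a positive integer by the congruence modulo $4$ from Step 1. The same congruence gives that $\xi_{1,k}/2$ is even, so $a$ is even. Direct computation then gives
$$2ac+3=3a(a+2)+3=3(a+1)^2,$$
$$2(a+4)c+3=3(a+2)(a+4)+3=3(a+3)^2.$$
Because $a$ is even, both $a+1$ and $a+3$ are odd, so $ac+1$ and $(a+4)c+1$ are star numbers.

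Distinctness also holds. We have $c=a+4$ only when $a=2$, whereas the smallest member of the family is $a=36$. Also $c>a$ for every $a>0$.

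\emph{Step 3: non-extension.} Suppose $\{a,a+4,c,b\}$ were a star $D(1)$-quadruple. Then $2ab+3=3x^2$ forces $3\mid 2ab$, so $3\mid ab$. In the same way $3\mid (a+4)b$. If $3\nmid b$, these give $3\mid a$ and $3\mid a+4$, hence $3\mid 4$, a contradiction. The pair $\{a,a+4\}$ alone already rules out such a $b$, so the condition involving $c$ is not needed.

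No step is a serious obstacle. The only point requiring care is the bookkeeping of congruences in Step 1: the odd parity of $\xi_{2,k}$ and the divisibility $4\mid\xi_{1,k}$ must be propagated through the recurrence, since they are what guarantee that $a$ and $c$ are integers and that the relevant square roots are odd.
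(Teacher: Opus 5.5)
Your proposal is correct and follows essentially the same route as the paper: you solve the Pell equation $X^2-6Y^2=10$ starting from $(8,3)$ and the unit $5+2\sqrt6$, verify directly that $ac+1=S_{(a+2)/2}$ and $(a+4)c+1=S_{(a+4)/2}$, and finish with a divisibility argument on $ab$ and $(a+4)b$, where you work modulo $3$ and the paper uses $12\mid ab$ and $12\mid (a+4)b$. Your explicit checks that $\xi_{2,k}$ is odd, that $4\mid\xi_{1,k}$, and that the three elements are distinct (excluding $a=2$) are slightly more careful than the paper's brief justification that $a$ is even.
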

It is easy to check that the set $\{4,108\}$ is a star Diophantine $D(1)$ pair, the following theorem describes the extendibility  of $\{4,108\}$ to a star Diophantine $D(1)$ triple.
\begin{theorem}\label{Theorem 1.7}
For all
$$a \in \bigg\{ \frac{3\left(\xi_{2,k}^{2}-1\right)}{8} : k\ge 1, \xi_{1,0}=-1, \xi_{2,0}=1 \bigg\}
\cup
\bigg\{ \frac{3\left(\xi_{2,k}^{2}-1\right)}{8} : k\ge 1, \xi_{1,0}=1, \xi_{2,0}=1 \bigg\},$$
where
$$\xi_{1,k}=26\,\xi_{1,k-1}+135\,\xi_{2,k-1},\quad
\xi_{2,k}=5\,\xi_{1,k-1}+26\,\xi_{2,k-1},$$
 the set $\{4,108,a\}$ is a star Diophantine $D(1)$-triple.
\end{theorem}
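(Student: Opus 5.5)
The plan is to translate the two star-number conditions on $a$ into one generalized Pell equation. We then check that the orbits described in the statement consist of solutions of that equation, and that every such solution yields an admissible $a$. Only sufficiency is claimed, so no completeness argument for the solution classes is needed.

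First we check the pair itself: $4\cdot 108+1=433=6\cdot 9\cdot 8+1=S_9$, so $\{4,108\}$ is a star Diophantine $D(1)$-pair. Next we rewrite star numbers: with $x=2m-1$,
\[
S_m-1=6m(m-1)=\tfrac32(x^2-1).
\]
Hence $4a+1=S_m$ is equivalent to $a=\tfrac{3(x^2-1)}{8}$ with $x$ odd and positive. Likewise, with $y=2n-1$, the condition $108a+1=S_n$ is equivalent to $a=\tfrac{y^2-1}{72}$. Equating the two expressions gives $27(x^2-1)=y^2-1$, that is,
\[
y^2-27x^2=-26 .
\]
Conversely, suppose $(y,x)$ is a solution in positive odd integers. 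Then $x^2\equiv 1\pmod 8$, so $a=3(x^2-1)/8$ is an integer. Also $y^2-1=27(x^2-1)$ is divisible by $27\cdot 8$, hence by $72$. Finally, $m=(x+1)/2$ and $n=(y+1)/2$ are positive integers with $4a+1=S_m$ and $108a+1=S_n$.

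Now we identify the recurrence. Since $26^2-27\cdot 5^2=1$, the number $26+5\sqrt{27}$ is a unit of norm $1$. Multiplying $\xi_{1,k-1}+\xi_{2,k-1}\sqrt{27}$ by it gives
\[
(26\xi_{1,k-1}+135\xi_{2,k-1})+(5\xi_{1,k-1}+26\xi_{2,k-1})\sqrt{27},
\]
which is exactly the stated recurrence. So each pair $(\xi_{1,k},\xi_{2,k})$ satisfies $\xi_{1,k}^2-27\xi_{2,k}^2=-26$, because the initial pairs $(\pm1,1)$ do: $1-27=-26$. We take $y=\xi_{1,k}$ and $x=\xi_{2,k}$.

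It remains to verify the side conditions, and this is the only step needing a little care.

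1. \emph{Parity.} Modulo $2$ the recurrence matrix is $\begin{pmatrix}0&1\\1&0\end{pmatrix}$, so the property that both entries are odd is preserved from $(\pm1,1)$.
2. \emph{Positivity.} An induction shows that for $k\ge1$ both entries are positive and strictly increasing. The first terms are $(109,21)$, giving $a=165$, and $(161,31)$, giving $a=360$; after that the coefficients $26,135,5,26$ force growth.
3. \emph{Distinctness.} Consequently $a\ge165$, so $a$ is distinct from $4$ and $108$, and $a$ is a positive integer.

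Putting these together, $4a+1$, $108a+1$ and $4\cdot108+1$ are all star numbers, so $\{4,108,a\}$ is a star Diophantine $D(1)$-triple. The main obstacle is nothing deep; it is just keeping the bookkeeping of parity and positivity correct for the $\xi_{1,0}=-1$ branch.
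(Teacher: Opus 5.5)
Your proposal is correct and follows the paper's route. Both arguments reduce $4a+1=S_m$ and $108a+1=S_n$ to $y^2-27x^2=-26$, then generate solutions from $(\pm1,1)$ using the norm-one unit $26+5\sqrt{27}$. The paper additionally uses Nagell's theorem to show that $(\pm1,1)$ are the only fundamental solutions, which it needs later for Theorem~\ref{Theorem 1.8} but not for this sufficiency statement. You instead explicitly check the converse conditions that the paper leaves implicit: oddness of both coordinates, integrality of $a$, positivity, and distinctness from $4$ and $108$.
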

From the above theorem, it is clear that $\{4,108,165\}$ is a star Diophantine $D(1)$-triple. In the next theorem we extend it to star Diophantine $D(1)$-quadruple and we also prove that the extension is unique.
\begin{theorem}\label{Theorem 1.8}
 Let $\{4,108,165,a\}$ be a star Diophantine $D(1)$-quadruple. Then $a=360$.
\end{theorem}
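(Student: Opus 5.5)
By the definition of $S_n$, an integer $N$ is a star number exactly when $2N+1=3x^2$ for some odd positive integer $x$: indeed $2S_n+1=3(2n-1)^2$. I would first use this to turn the three conditions on $a$ into a system of simultaneous Pell equations. Writing $4a+1=S_{n_1}$, we get $8a+3=3x^2$, so $3\mid a$; put $a=3t$. The three conditions then become
\[
8t+1=x^2,\qquad 216t+1=y^2,\qquad 330t+1=z^2,
\]
with $x,y,z$ odd. For $a=360$, i.e.\ $t=120$, these give $31^2$, $161^2$ and $199^2$. Eliminating $t$ pairwise gives
\[
y^2-27x^2=-26,\qquad 4z^2-165x^2=-161,\qquad 36z^2-55y^2=-19 .
\]
It suffices to work with two of these, for example the first two, because they share the unknown $x$.

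The second step is to describe all solutions of each equation. The equation $y^2-27x^2=-26$ lives in $\mathbb{Z}[\sqrt{3}]$. The equation $(2z)^2-165x^2=-161$ lives in $\mathbb{Z}[\sqrt{165}]$, and $165$ has fundamental unit $13+\sqrt{165}$ with norm $4$, so I would work with $(2z)^2$ to stay integral, or pass to the unit $(13+\sqrt{165})/2$. Nagell's theorem bounds the fundamental solutions of each class, and a finite search then lists them. This gives
\[
x=v_m=\frac{(y_0+x_0\sqrt{27})(26+15\sqrt3)^m-(y_0-x_0\sqrt{27})(26-15\sqrt3)^m}{2\sqrt{27}},
\]
up to the appropriate normalisation, and similarly $x=w_n$ from the unit of $\mathbb{Q}(\sqrt{165})$. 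So we must solve $v_m=w_n$ over finitely many pairs of classes. Congruences modulo small numbers, using the periodicity of $v_m$ and $w_n$, should discard most class pairs and force relations between the parities of $m$ and $n$.

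Next I would turn $v_m=w_n$ into a linear form in three logarithms,
\[
\Lambda=m\log\alpha-n\log\beta+\log\gamma,\qquad 0<|\Lambda|\ll \alpha^{-2m},
\]
where $\alpha=26+15\sqrt3$, $\beta$ is the unit of $\mathbb{Q}(\sqrt{165})$, and $\gamma$ is a quotient built from the fundamental solutions. Matveev's theorem then gives an absolute bound, something like $m<10^{14}$. I expect this step and the one after it to be the main obstacle. They require careful bookkeeping of heights and of the degree $[\mathbb{Q}(\sqrt3,\sqrt{165}):\mathbb{Q}]=4$, together with a check that $\Lambda\neq0$.

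Finally I would apply the Dujella--Peth\H{o} version of the Baker--Davenport reduction. Dividing $\Lambda$ by $\log\beta$ gives $|m\kappa-n+\mu|<AB^{-m}$, and continued fractions of $\kappa=\log\alpha/\log\beta$ reduce the bound to $m\le$ a few units, after possibly a second reduction round. A direct check of the remaining small $m$ then shows that the only $t$ making all three expressions squares is $t=120$, besides the excluded values that would make $a$ repeat $4$, $108$ or $165$ or be nonpositive. Hence $a=360$. As a consistency check, the third equation $36z^2-55y^2=-19$ can be used to confirm the surviving solution or to eliminate spurious class pairs early.
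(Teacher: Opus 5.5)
Your plan is correct and is essentially the paper's proof. In both, the star-number conditions become simultaneous Pell equations and Nagell's bound lists the fundamental solutions. Equating the resulting recurrence sequences then gives a three-term linear form in logarithms over the quartic field $\mathbb{Q}(\sqrt3,\sqrt{55})=\mathbb{Q}(\sqrt3,\sqrt{165})$, which is bounded by Matveev's theorem and then cut down by the Dujella--Peth\H{o} reduction.

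The only difference is which pair of equations is intersected. The paper uses $y^2-27x^2=-26$ together with $(36z)^2-1980y^2=-684$, so the common unknown is your $y$, the unit is $89+2\sqrt{1980}$, there are only the two classes $\pm36+\sqrt{1980}$, and some class pairs are discarded modulo $8$. Your common unknown $x$ instead leads, via the unit $1079+84\sqrt{165}=\bigl(\tfrac{13+\sqrt{165}}{2}\bigr)^3$, to the four classes $\pm2+\sqrt{165}$ and $\pm218+17\sqrt{165}$. That means more cases to handle, but the argument runs the same way.
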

As an immediate consequence, we obtain the following corollary.
\begin{corollary}\label{Theorem 1.9}
There does not exist a star Diophantine $D(1)$-quintuple of the form $\{4,108,165,a,b\}$.
\end{corollary}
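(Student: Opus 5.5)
The corollary follows directly from the uniqueness statement in Theorem \ref{Theorem 1.8}; no further Diophantine analysis is needed. We argue by contradiction. Suppose $\{4,108,165,a,b\}$ is a star Diophantine $D(1)$-quintuple. By definition its elements are distinct positive integers, so $a\neq b$, and neither $a$ nor $b$ lies in $\{4,108,165\}$.

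The key observation is that the defining property passes to subsets. If every product of two distinct elements of a set, increased by $1$, is a star number, then the same holds for any subset. So deleting $b$ from the quintuple shows that $\{4,108,165,a\}$ is a star Diophantine $D(1)$-quadruple. Likewise, deleting $a$ shows that $\{4,108,165,b\}$ is one.

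Applying Theorem \ref{Theorem 1.8} to each of these two quadruples gives $a=360$ and $b=360$. Hence $a=b$, which contradicts the distinctness of the elements of the quintuple. Therefore no such quintuple exists.

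There is no real obstacle in this step. All the difficulty sits inside Theorem \ref{Theorem 1.8}, which we assume here; the only point to check is that the subset-closure and distinctness conditions are exactly those in the definition of a star Diophantine $D(k)$ $m$-tuple.
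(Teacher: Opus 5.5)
Your proposal is correct and matches the paper's proof exactly. You restrict the putative quintuple to the two quadruples $\{4,108,165,a\}$ and $\{4,108,165,b\}$, apply Theorem~\ref{Theorem 1.8} to obtain $a=b=360$, and this contradicts the distinctness of the elements.
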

In view of the computation evidence and results obtained in this article, we strongly believe that the following conjecture is true: 
\begin{conjecture}
 There exists no star Diophantine  $D(1)$-quintuple.   
\end{conjecture}
We also pose the following open question.
\begin{question}
 Does there exist a star Diophantine $D(1)$-quadruple other than $\{4,108,165,360\}$.   
\end{question} 
\section{Preliminaries}
In this section, we prove several auxiliary lemmas and recall a few useful results, which will play a pivotal role in the proof of our main theorems.
\begin{lemma}\label{Lemma 1}
Let $a,b,c,n$ be four integers with $b>a\geq1$. Then $S_aS_b+n=c^2$ is equivalent to the generalized Pell equation
\begin{align}\label{eq pell}
X^2-\left(9(2a-1)^2-3\right)Y^2=4n-3(2a-1)^2+1,    
\end{align}

where $X=2c, Y=2b-1$.
\end{lemma}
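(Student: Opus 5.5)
The plan is a direct algebraic computation: rewrite each star number in terms of the odd integer $2a-1$ (respectively $2b-1$), substitute into $S_aS_b+n=c^2$, and rearrange into the Pell form \eqref{eq pell}.

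\textbf{Step 1: rewrite $S_a$.} Put $u=2a-1$ and $v=2b-1$. Then $3u^2=12a^2-12a+3$, so
$$S_a=6a(a-1)+1=\frac{3u^2-1}{2},\qquad S_b=\frac{3v^2-1}{2}.$$

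\textbf{Step 2: substitute and expand.} The equation $S_aS_b+n=c^2$ becomes
$$\frac{(3u^2-1)(3v^2-1)}{4}+n=c^2.$$
Multiplying by $4$ and writing $X=2c$ gives
$$X^2=(3u^2-1)(3v^2-1)+4n=9u^2v^2-3u^2-3v^2+1+4n.$$

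\textbf{Step 3: collect the $v^2$ terms.} Moving the terms containing $v^2$ to the left yields
$$X^2-\left(9u^2-3\right)v^2=4n-3u^2+1.$$
With $Y=v=2b-1$ and $u=2a-1$, this is exactly \eqref{eq pell}.

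\textbf{Step 4: equivalence.} Every step above is reversible under the substitution $X=2c$, $Y=2b-1$. Starting from \eqref{eq pell} with these values, we can undo the rearrangement, divide by $4$, and recover $S_aS_b+n=c^2$. Hence the two equations are equivalent.

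\textbf{Obstacle.} There is no genuine difficulty. The only points needing care are:
\begin{itemize}
\item the identity $S_a=(3(2a-1)^2-1)/2$;
\item noting that the correspondence is through $X=2c$ and $Y=2b-1$, so it applies to solutions of \eqref{eq pell} with $X$ even and $Y$ odd and positive.
\end{itemize}
The hypothesis $b>a\geq 1$ plays no role in the algebra; it only fixes which index is treated as the parameter and which as the unknown.
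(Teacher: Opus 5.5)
Your proposal is correct and follows the paper's own argument: rewrite $S_a=\frac{3(2a-1)^2-1}{2}$ and $S_b=\frac{3(2b-1)^2-1}{2}$, multiply $S_aS_b+n=c^2$ by $4$, and move the $(2b-1)^2$ terms to the left to get \eqref{eq pell}. Your extra remarks on reversibility and on restricting to solutions with $X$ even and $Y$ odd only make explicit what the paper leaves implicit.
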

\begin{proof}
Observe that
$$S_a=6a(a-1)+1=\frac{3(2a-1)^2-1}{2},\quad S_b=6b(b-1)+1=\frac{3(2b-1)^2-1}{2}.$$
 Since $S_aS_b+n=c^2$, we have
 $$\frac{\big(3(2a-1)^2-1\big)\big(3(2b-1)^2-1\big)}{4}+n=c^2,$$
 which gives
 $$(2c)^2-(9(2a-1)^2-3)(2b-1)^2=4n-3(2a-1)^2+1.$$
 \end{proof}

\begin{lemma}\label{Lemma 2}
Under the assumptions in Lemma \ref{Lemma 1}, for fixed $a\geq1$ and $n$, if $(x,y)$ is a positive integer solution of (\ref{eq pell}), then $S_aS_b+n=c^2$ is satisfied for each
$$b \in \left\{ \frac{\xi_{2,k}+1}{2} : k\ge 0 \right\},$$
where  $\xi_{1,0}=x,\xi_{2,0}=y$, and for $k\geq1$
$$\xi_{1,k}=\big(6(2a-1)^2-1\big)\xi_{1,k-1}+2(2a-1)\big(9(2a-1)^2-3\big)\xi_{2,k-1},$$
$$\xi_{2,k}=2(2a-1)\xi_{1,k-1}+\big(6(2a-1)^2-1\big)\xi_{2,k-1}.$$
\end{lemma}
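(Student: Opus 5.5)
The plan is to use the standard automorphism of the Pell form in \eqref{eq pell}. Put $D=9(2a-1)^2-3$ and $N=4n-3(2a-1)^2+1$. I will first find a nontrivial unit $u+v\sqrt{D}$ of norm $1$ whose entries depend polynomially on $a$. Since $D=(3(2a-1))^2-3$, the natural candidate is $u=6(2a-1)^2-1$ and $v=2(2a-1)$. Writing $t=2a-1$, one checks
$$u^2-Dv^2=(6t^2-1)^2-4t^2(9t^2-3)=36t^4-12t^2+1-36t^4+12t^2=1.$$
Multiplication by $u+v\sqrt D$ sends $\xi_{1,k-1}+\xi_{2,k-1}\sqrt D$ to
$$(u\xi_{1,k-1}+Dv\,\xi_{2,k-1})+(v\xi_{1,k-1}+u\xi_{2,k-1})\sqrt D.$$
This is exactly the recursion in the statement, since $Dv=2(2a-1)(9(2a-1)^2-3)$.

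Next I argue by induction on $k$ that every pair $(\xi_{1,k},\xi_{2,k})$ solves \eqref{eq pell}. Norms are multiplicative, so
$$\xi_{1,k}^2-D\xi_{2,k}^2=(u^2-Dv^2)(\xi_{1,k-1}^2-D\xi_{2,k-1}^2)=N.$$
The base case is the hypothesis on $(x,y)$. Positivity is kept at every step because $u,v,D>0$, so all the terms stay positive integers.

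To return to $S_aS_b+n=c^2$ through Lemma \ref{Lemma 1}, I need $\xi_{2,k}$ odd (so that $b=(\xi_{2,k}+1)/2$ is an integer) and $\xi_{1,k}$ even (so that $c=\xi_{1,k}/2$ is an integer). Modulo $2$ we have $u\equiv 1$ and $v\equiv0$. Hence $\xi_{1,k}\equiv\xi_{1,k-1}$ and $\xi_{2,k}\equiv \xi_{2,k-1}\pmod 2$, so parity is inherited from $(x,y)$.

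The main obstacle is this parity issue, since the lemma tacitly assumes the seed has $x$ even and $y$ odd, as produced by Lemma \ref{Lemma 1} from an actual pair. I would read the hypothesis "$(x,y)$ is a solution" as coming from some $b_0$ with $y=2b_0-1$ and $x=2c_0$. Even without that reading, parity can be forced from the equation itself. Since $D\equiv 2\pmod 4$, an odd $y$ gives $x^2\equiv N+2\pmod 4$. Because $N\equiv 4n-3t^2+1\equiv 2\pmod 4$ for odd $t$, this makes $x^2\equiv 0\pmod 4$, so $x$ is even. Substituting back gives $S_aS_b+n=(\xi_{1,k}/2)^2$ for each $b$ in the stated set.
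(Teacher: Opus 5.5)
Your proof is correct and follows the paper's argument: the paper also exhibits the norm-one unit $6(2a-1)^2-1+2(2a-1)\sqrt{9(2a-1)^2-3}$, uses multiplicativity of the norm, and expands $(x+y\sqrt{D})$ times powers of that unit to obtain the recursion. Your parity discussion supplies a step the paper simply asserts, and it is unconditional once you also note that $y$ cannot be even, since that would give $x^2\equiv N\equiv 2\pmod 4$.
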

\begin{proof}
We first prove that $\xi=6(2a-1)^2-1+2(2a-1)\sqrt{9(2a-1)^2-3}$ has norm $1$ in $\mathbb{Z}\big[\sqrt{\big(9(2a-1)^2-3\big)}\big]$. We have 
 $$N(\xi)=\big(6(2a-1)^2-1\big)^2-\big(9(2a-1)^2-3)(2(2a-1)\big)^2=1.$$

Since $x^2-\big(9(2a-1)^2-3\big)y^2=4n-3(2a-1)^2+1$, we have
 \begin{align*}
N\bigg(\big(x+y\sqrt{9(2a-1)^2-3}\big)\xi^k\bigg)&=N\bigg(x+y\sqrt{9(2a-1)^2-3}\bigg)N(\xi^k)\\
 &=4n-3(2a-1)^2+1,
 \end{align*}
 for each integer $k\geq0$. Therefore, setting
 \begin{align}\label{eq 2}
 \xi_{1,k}+\xi_{2,k}\sqrt{9(2a-1)^2-3}=\big(x+y\sqrt{9(2a-1)^2-3}\big)\xi^k,    
 \end{align}
we can observe that $(\xi_{1,k},\xi_{2,k})$ satisfies (\ref{eq pell}). Now, iterating (\ref{eq 2}), for $k\geq1$, we get
 $$\xi_{1,k}=\big(6(2a-1)^2-1\big)\xi_{1,k-1}+2(2a-1)\big(9(2a-1)^2-3\big)\xi_{2,k-1},$$
$$\xi_{2,k}=2(2a-1)\xi_{1,k-1}+\big(6(2a-1)^2-1\big)\xi_{2,k-1},$$
with $\xi_{1,0}=x,\xi_{2,0}=y$. Thus, for each $\xi_{2,k}$, we have some integer $b$ such that $2b-1=\xi_{2,k}$. Consequently, for a fixed $a\geq1$, for each
$$b \in \left\{ \frac{\xi_{2,k}+1}{2} : k\ge 0 \right\},$$
we have $S_aS_b+n=c^2$, where $c$ depends on $b$.
\end{proof}

\begin{lemma}\label{Lemma 3}
Let $a,b,n\geq1$ be three integers with $b>a$. Then $ab+1=S_n$ is equivalent to the generalized Pell  equation
\begin{align}\label{eq star pell}
X^2-6Y^2=(b-a)^2-6, 
\end{align}
where $X=a+b$, $Y=2n-1$.
\end{lemma}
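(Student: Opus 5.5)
The plan is to reduce the star condition to an explicit algebraic identity, in the same spirit as the proof of Lemma \ref{Lemma 1}. The key ingredient is the closed form
$$S_n=6n(n-1)+1=\frac{3(2n-1)^2-1}{2},$$
already used in the proof of Lemma \ref{Lemma 1}. Substituting it, the equation $ab+1=S_n$ becomes
$$ab+1=\frac{3(2n-1)^2-1}{2},\qquad\text{that is,}\qquad 2ab+3=3Y^2 \quad\text{with } Y=2n-1 .$$

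We then rewrite the product $ab$ in terms of $a+b$ and $b-a$ via
$$4ab=(a+b)^2-(b-a)^2 .$$
Multiplying $2ab+3=3Y^2$ by $2$ gives $4ab+6=6Y^2$, and substituting the identity yields
$$(a+b)^2-(b-a)^2+6=6Y^2,$$
which rearranges to
$$X^2-6Y^2=(b-a)^2-6,\qquad X=a+b,\ Y=2n-1 .$$
This is exactly (\ref{eq star pell}).

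Each step above is a reversible algebraic manipulation: multiplying by the nonzero constant $2$ and substituting a polynomial identity. So for integers $a,b,n$, the relation $ab+1=S_n$ holds if and only if $(X,Y)=(a+b,2n-1)$ satisfies (\ref{eq star pell}). This gives the claimed equivalence under the stated correspondence $X=a+b$, $Y=2n-1$, with $a,b$ fixed and $b>a$ ensuring $b-a$ is a fixed positive integer.

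There is no real obstacle in this argument. The only point needing care is the bookkeeping of the factor $2$, so that the constant on the right-hand side comes out as $(b-a)^2-6$, and stating clearly that the equivalence is with respect to this change of variables.
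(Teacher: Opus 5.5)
Your proposal is correct and follows essentially the same route as the paper: substitute $S_n=\frac{3(2n-1)^2-1}{2}$ to get $2ab+3=3(2n-1)^2$, then rewrite via $4ab=(a+b)^2-(b-a)^2$ to reach $X^2-6Y^2=(b-a)^2-6$. Your explicit remark that each step is reversible (so the equivalence holds under $X=a+b$, $Y=2n-1$) is a small clarification the paper leaves implicit.
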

\begin{proof}
Since
$$S_n=6n(n-1)+1=\frac{3(2n-1)^2-1}{2},$$
and $ab+1=S_n$, we have
 $$ab+1=\frac{3(2n-1)^2-1}{2},$$
 which gives $2ab+3=3(2n-1)^2,$ then 
 $$(a+b)^2-6(2n-1)^2=(b-a)^2-6.$$
 \end{proof}
 \begin{lemma}\label{Lemma 4}
 Under the assumptions in Lemma \ref{Lemma 3}, if $(x,y)$ is a positive integer solution of (\ref{eq star pell}), then for each integer $k\geq0$, $(\xi_{1,k},\xi_{2,k})$ is also a positive integer solution of (\ref{eq star pell}), where $\xi_{1,0}=x,\xi_{2,0}=y$ and for $k\geq1$
$$\xi_{1,k}=5\xi_{1,k-1}+12\xi_{2,k-1},\quad \xi_{2,k}=2\xi_{1,k-1}+5\xi_{2,k-1}.$$
\end{lemma}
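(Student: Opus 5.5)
The plan mirrors the proof of Lemma \ref{Lemma 2}, with the quadratic form $X^2-6Y^2$ of Lemma \ref{Lemma 3} in place of the one in Lemma \ref{Lemma 1}. First I will exhibit a unit of norm $1$ in $\mathbb{Z}[\sqrt6]$. Take $\eta=5+2\sqrt6$; then
$$N(\eta)=5^2-6\cdot 2^2=1.$$
Given a positive solution $(x,y)$ of (\ref{eq star pell}), define $\xi_{1,k},\xi_{2,k}$ by
$$\xi_{1,k}+\xi_{2,k}\sqrt6=(x+y\sqrt6)\,\eta^k,\qquad k\ge 0.$$
Since the norm is multiplicative, $N\big((x+y\sqrt6)\eta^k\big)=N(x+y\sqrt6)\,N(\eta)^k=(b-a)^2-6$. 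Hence each $(\xi_{1,k},\xi_{2,k})$ is a solution of (\ref{eq star pell}).

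Next I will derive the recurrence. Multiply $\xi_{1,k-1}+\xi_{2,k-1}\sqrt6$ by $5+2\sqrt6$ and compare the rational and irrational parts. This gives
$$\xi_{1,k}=5\xi_{1,k-1}+12\xi_{2,k-1},\qquad \xi_{2,k}=2\xi_{1,k-1}+5\xi_{2,k-1},$$
with $\xi_{1,0}=x$ and $\xi_{2,0}=y$. Integrality is immediate, because all coefficients are integers and we start from integers.

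It remains to check positivity, which I will do by induction on $k$. If $\xi_{1,k-1},\xi_{2,k-1}>0$, then both right-hand sides are positive integer combinations of positive integers, so they are positive. The base case is exactly the hypothesis that $(x,y)$ is a positive solution.

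I expect no real obstacle here. The only point needing care is that positivity requires both entries of the starting solution to be positive, and the statement assumes this. Note also that the lemma does not claim that the resulting $\xi_{1,k},\xi_{2,k}$ come from some $a,b,n$ (for example, parity conditions such as $\xi_{2,k}$ being odd). That issue is handled separately wherever the lemma is applied.
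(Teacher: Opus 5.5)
Your proposal is correct and follows essentially the same route as the paper: it uses the norm-one unit $5+2\sqrt6$ and multiplicativity of the norm, then reads off the recurrence by comparing coefficients in $(\xi_{1,k-1}+\xi_{2,k-1}\sqrt6)(5+2\sqrt6)$. Your explicit induction for positivity is a small but worthwhile addition, since the paper states that the new solutions are positive integers without checking it.
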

\begin{proof}
We first prove that $\xi=5+2\sqrt{6}$ has norm $1$ in $\mathbb{Z}[\sqrt{6}]$. We have 
 $$N(\xi)=5^2-6\cdot 2^2=1.$$

Since $x^2-6y^2=(b-a)^2-6$, we have
 \begin{align*}
N\bigg(\big(x+y\sqrt{6}\big)\xi^k\bigg)=N\big(x+y\sqrt{6}\big)N(\xi^k)=(b-a)^2-6,
 \end{align*}
 for each integer $k\geq0$. Therefore, setting
 \begin{align}\label{eq 4}
 \xi_{1,k}+\xi_{2,k}\sqrt{6}=\big(x+y\sqrt{6}\big)\xi^k,    
 \end{align}
we can observe that $(\xi_{1,k},\xi_{2,k})$ satisfies (\ref{eq star pell}). Now, iterating (\ref{eq 4}), for $k\geq1$, we get 
 $$\xi_{1,k}=5\xi_{1,k-1}+12\xi_{2,k-1},\quad \xi_{2,k}=2\xi_{1,k-1}+5\xi_{2,k-1},$$
with $\xi_{1,0}=x,\xi_{2,0}=y$.
\end{proof}
To prove Theorem \ref{Theorem 1.8}, we need the following results due to Nagell, Matveev, and an immediate variation of a result due to Dujella and Peth{\H{o}}.
\begin{theorem}[Nagell, 1951]\label{Nagell}
Let $u+v\sqrt{d}$ be the fundamental solution of the
equation $x^2-dy^2=1$. Then, for each fundamental solution $x^*+y^*\sqrt{d}$ of
the equation $x^2-dy^2=N,$ the following inequalities hold:
$$0\leq y^*\leq\frac{v}{\sqrt{2(u+\epsilon)}}\sqrt{|N|},$$
$$|x^*|\leq\sqrt{\frac{1}{2}(u+\epsilon)|N|},$$
where $\epsilon=1$ if $N>0$, and $\epsilon=-1$ if $N<0$.
In particular, there are only finitely many fundamental solutions
(and therefore, also the classes of solutions).
\end{theorem}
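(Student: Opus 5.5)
This statement is Nagell's classical theorem, which the paper only quotes. If one wanted a self-contained argument, I would use the standard minimality argument. It multiplies a fundamental solution by the inverse of the fundamental unit $\varepsilon_0=u+v\sqrt d$ and compares the $\sqrt d$-coordinates.

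\textbf{Setup.} Recall that a class of solutions of $x^2-dy^2=N$ consists of the numbers $(x+y\sqrt d)\varepsilon_0^{k}$ for $k\in\mathbb Z$. The fundamental solution $x^*+y^*\sqrt d$ of a class is the member with the least nonnegative $y$. Replacing the solution by its negative if necessary (which does not change $y^*$ in the ambiguous case), I may assume $x^*\ge 0$. Then
\[
(x^*+y^*\sqrt d)\,\varepsilon_0^{-1}=(x^*u-dy^*v)+(y^*u-x^*v)\sqrt d
\]
lies in the same class. By minimality, $|y^*u-x^*v|\ge y^*$. This splits into two cases: (i) $y^*u-x^*v\ge y^*$, i.e. $y^*(u-1)\ge x^*v$; or (ii) $x^*v-y^*u\ge y^*$, i.e. $x^*v\ge y^*(u+1)$. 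Both sides are nonnegative in each case, so squaring is legitimate.

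\textbf{Case analysis.} I would substitute $x^{*2}=N+dy^{*2}$ and $dv^2=u^2-1$ into each squared inequality.

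For $N>0$, case (i) gives
\[
y^{*2}(u-1)^2\ge Nv^2+y^{*2}(u^2-1),
\]
that is, $-2(u-1)y^{*2}\ge Nv^2>0$. This is impossible, so case (ii) must hold. Case (ii) gives $Nv^2+y^{*2}(u^2-1)\ge y^{*2}(u+1)^2$, i.e. $y^{*2}\le \dfrac{Nv^2}{2(u+1)}$, which is the claimed bound with $\epsilon=1$.

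For $N<0$ the roles reverse. Case (ii) gives $-|N|v^2\ge 2(u+1)y^{*2}$, which is impossible. Case (i) gives $y^{*2}\le\dfrac{|N|v^2}{2(u-1)}$, the bound with $\epsilon=-1$.

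\textbf{Bound on $x^*$.} Using $dv^2=u^2-1$ again,
\[
x^{*2}=N+dy^{*2}\le N+\frac{(u^2-1)|N|}{2(u+\epsilon)}.
\]
For $N>0$ this equals $\tfrac12(u+1)N$. For $N<0$ it equals $-|N|+\tfrac12(u+1)|N|=\tfrac12(u-1)|N|$. Hence $|x^*|\le\sqrt{\tfrac12(u+\epsilon)|N|}$. Finiteness of the number of classes then follows, since only finitely many integer pairs satisfy these bounds.

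\textbf{Main obstacle.} The computations are routine. The delicate point is the normalization: one must make sure the fundamental solution can be taken with $x^*\ge 0$ and $y^*$ minimal in $|y|$ across the class, so that the inequality $|y^*u-x^*v|\ge y^*$ is genuinely available. This needs care in the ambiguous classes, where a class coincides with its conjugate. I would handle it by checking directly that negating $(x,y)$ maps classes to classes and preserves the minimal $|y|$.
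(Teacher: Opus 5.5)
The paper gives no proof of this statement; it only quotes Nagell. Your argument is Nagell's own proof. You multiply the fundamental solution by $\varepsilon_0^{-1}=u-v\sqrt d$, use minimality of $y^*$ to get $|y^*u-x^*v|\ge y^*$, and square the surviving linear inequality. The algebra in both sign cases and in the bound for $|x^*|$ is correct.

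Two points in your setup need fixing, because the key step depends on them.

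\textbf{The definition of a class.} Classes must be orbits under \emph{all} solutions $\pm\varepsilon_0^{k}$ of $x^2-dy^2=1$ (Nagell's definition), not only under $\varepsilon_0^{k}$. With your stated definition, ``least nonnegative $y$'' does not give $|y^*u-x^*v|\ge y^*$, and the theorem itself fails. Take $d=2$, $N=7$, $\varepsilon_0=3+2\sqrt2$. The orbit $\{(3-\sqrt2)\varepsilon_0^{k}\}$ has $y$-values $\dots,-9,-1,3,19,\dots$, so its least nonnegative $y$ is $3>\sqrt{7/2}$. At $(x^*,y^*)=(5,3)$ one has $|y^*u-x^*v|=1<3$. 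With the correct definition every class is closed under $\alpha\mapsto-\alpha$. Its $y$-values are then symmetric, so $|Y|\ge y^*$ for every $X+Y\sqrt d$ in the class, whatever the sign of $x^*$.

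\textbf{The reduction to $x^*\ge 0$.} In a non-ambiguous class $K$, the number $-x^*+y^*\sqrt d=-\overline{(x^*+y^*\sqrt d)}$ lies not in $K$ but in the conjugate class $\bar K$. Conjugation preserves the values $|y|$, so $y^*$ is still minimal in $\bar K$. Since the bounds involve only $y^*$ and $|x^*|$, you may run your argument on $(-x^*,y^*)$. This step is not vacuous: for $d=2$, $N=7$, the fundamental solution of the class of $-3+\sqrt2$ is $-3+\sqrt2$ itself. It is also necessary, because ruling out case (i) when $N>0$ and squaring case (i) when $N<0$ both use $x^*v\ge 0$.

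With these two adjustments your proof is complete.
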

Before stating other two results, let us first recall some basic notions from algebraic number theory.

Let $\eta $ be an algebraic number of degree $d$ with minimal polynomial 
$$a_{0}x^{d}+a_{1}x^{d-1}+\ldots+a_{d}=a_{0}\prod\limits_{i=1}^{d}\left(x-\eta
^{(i)}\right), $$
where the $a_{i}$'s are relatively prime integers with $a_{0}>0$ and $\eta
^{(i)}$'s are conjugates of $\eta $. Then 
\begin{align*}
  h(\eta )=\frac{1}{d}\left( \log a_{0}+\sum\limits_{i=1}^{d}\log \left( \max
\left\{ |\eta ^{(i)}|,1\right\} \right) \right)  
\end{align*}
is called the logarithmic height of $\eta .$ In particular, if $\eta =a/b$
is a rational number with $\gcd (a,b)=1$ and $b>0,$ then $h(\eta )=\log
\left( \max \left\{ |a|,b\right\} \right)$. The following are some well known properties of logarithmic height:
\begin{enumerate}
    \item[\upshape(1)] $h(\eta \pm \gamma )\leq h(\eta )+h(\gamma )+\log 2$,
    \item[\upshape(2)] $h(\eta \gamma ^{\pm 1})\leq h(\eta )+h(\gamma )$,
    \item[\upshape(3)]$h(\eta ^{s})=|s|h(\eta ),~s\in \mathbb{Z}$. 
\end{enumerate}

\begin{theorem}[\cite{r13}]\label{Thm2}
Assume that $\gamma _{1},\gamma _{2},...,\gamma _{r}$ are
positive real algebraic numbers in a real algebraic number field $\mathbb{K}$
of degree $d$, $b_{1},b_{2},\ldots,b_{r}\in \mathbb{Z}$, and 
$$\Lambda :=\gamma _{1}^{b_{1}}...\gamma _{r}^{b_{r}}-1 $$
is not zero. Then 
$$|\Lambda |>\exp \left( -1.4\cdot 30^{r+3}\cdot r^{4.5}\cdot d^{2}\cdot(1+\log
d)\cdot(1+\log B)A_{1}A_{2}...A_{r}\right), $$
where 
$$
B\geq \max \left\{ |b_{1}|,...,|b_{r}|\right\}, 
$$
and 
$$A_{i}\geq \max \left\{ dh(\gamma _{i}),|\log \gamma _{i}|,0.16\right\}\quad \text{for all } i=1,...,r.$$

\end{theorem}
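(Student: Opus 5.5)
This statement is not proved in the paper. It is Matveev's theorem, quoted from \cite{r13}, so the appropriate justification is a reduction to Matveev's general result together with an elementary passage from a linear form in logarithms to the multiplicative form $\Lambda$. I would not try to reprove the transcendence estimate itself. The plan is to take as a black box Matveev's main lower bound for
$$L:=b_1\log\gamma_1+\cdots+b_r\log\gamma_r\neq0,$$
in a number field $\mathbb{K}$ of degree $d$, and then specialise it.

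For real fields, with real positive $\gamma_i$ and real logarithms, Matveev's main estimate reads
$$\log|L|>-C(r)\,d^2(1+\log d)(1+\log B)A_1\cdots A_r .$$
Here $C(r)$ is the explicit constant from his paper, to be checked against \cite{r13}. The $A_i$ satisfy $A_i\ge\max\{dh(\gamma_i),|\log\gamma_i|,0.16\}$, and $B\ge\max|b_i|$. The first step is to confirm that in the real case $C(r)\le 1.4\cdot30^{r+3}r^{4.5}$ holds; I believe this is exactly how the constant is stated in \cite{r13} for real fields. I would check it by substituting $\kappa=1$ in his formula and comparing. The height conditions on $A_i$ transfer verbatim from his statement.

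The second step converts the bound on $L$ into a bound on $\Lambda=e^{L}-1$.
\begin{enumerate}
\item $\Lambda\neq0$ forces $L\neq0$, so Matveev's bound applies to $L$.
\item If $|\Lambda|\ge 1/2$, the claimed inequality is trivial, because its right-hand side is far below $1/2$.
\item If $|\Lambda|<1/2$, the elementary inequality $|\log(1+x)|\le 2|x|$ for $|x|\le1/2$ gives $|L|\le2|\Lambda|$. Hence $|\Lambda|\ge|L|/2$.
\end{enumerate}
The factor $1/2$ has to be absorbed into the constant. This works because the exponent is at least $1.4\cdot30^{4}\cdot0.16^r\cdot(\dots)$, which is enormous, so replacing $1.4$ by a marginally smaller constant inside Matveev's bound leaves room for it. Alternatively one can use the version of his theorem stated directly for $\Lambda$.

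The main obstacle is bookkeeping rather than mathematics: matching the normalisation of the heights $A_i$ and of $B$ to Matveev's original (Izvestiya) formulation, and verifying that the real-field specialisation yields precisely the constant $1.4\cdot30^{r+3}r^{4.5}$. In the paper this theorem is simply cited and then applied, in the proof of Theorem~\ref{Theorem 1.8}, to linear forms in $r=3$ logarithms in a real quadratic field ($d=2$).
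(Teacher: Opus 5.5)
Your proposal is correct and matches the paper, which gives no proof and simply cites Matveev's result. Your passage from the additive form also works: Matveev's real-field constant is $\frac{e}{2}\cdot 30^{r+3}r^{4.5}<1.4\cdot 30^{r+3}r^{4.5}$, and the leftover margin easily absorbs the factor $\frac12$ coming from $|L|\le 2|\Lambda|$ (you should, however, bound the exponent below by $1.4\cdot 30^{r+3}\cdot 0.16^{r}$ rather than $1.4\cdot 30^{4}\cdot 0.16^{r}$, which does not stay large as $r$ grows). One factual slip: in the proof of Theorem~\ref{Theorem 1.8} the paper applies the bound in the real quartic field $\mathbb{Q}(\sqrt{3},\sqrt{55})$ with $d=4$, not in a quadratic field with $d=2$.
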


\begin{lemma}\label{Lem1}\cite{r7} Let $M$ be a positive integer, let $p/q$ be a convergent of the
continued fraction expansion of the irrational number $\gamma $ such that $q>6M,$ and
let $A,B,\mu $ be some real numbers with $A>0$ and $B>1.$ Let $\epsilon
:=||\mu q||-M||\gamma q||$, where the function $||\cdot ||$ denotes the
distance from $x$ to the nearest integer, that is, $||x||=\min \left\{
|x-n|:n\in\mathbb{Z}\right\} $ for a real number $x$. If $\epsilon >0,$ then there exists no solution
to the inequality 
$$0<|u\gamma -v+\mu |<AB^{-w}, $$
in positive integers $u,v,$ and $w$ with 
$$u\leq M\text{ and }w\geq \frac{\log (Aq/\epsilon )}{\log B}.$$
\end{lemma}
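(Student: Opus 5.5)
The plan is a direct elementary argument: multiply the inequality by $q$ and use that $p/q$ is a convergent of $\gamma$. Suppose, for contradiction, that positive integers $u,v,w$ satisfy $0<|u\gamma-v+\mu|<AB^{-w}$ with $u\le M$ and $w\ge \log(Aq/\epsilon)/\log B$. Multiplying by $q>0$ gives
$$|uq\gamma-vq+\mu q|<qAB^{-w}.$$

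The key step is a lower bound for the left-hand side in terms of $\epsilon$.

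\begin{enumerate}
\item Since $p/q$ is a convergent of the continued fraction of $\gamma$, we have $|q\gamma-p|<1/q\le 1/2$. Hence $p$ is the integer nearest to $q\gamma$, that is, $|q\gamma-p|=\|q\gamma\|$.
\item Split the term $uq\gamma$ as $uq\gamma=up+u(q\gamma-p)$. Then
$$uq\gamma-vq+\mu q=\bigl(\mu q-(vq-up)\bigr)+u(q\gamma-p).$$
\item The quantity $vq-up$ is an integer, so $|\mu q-(vq-up)|\ge\|\mu q\|$.
\item Using $0<u\le M$, we get $|u(q\gamma-p)|\le M\|q\gamma\|$.
\item The triangle inequality then gives
$$|uq\gamma-vq+\mu q|\ge \|\mu q\|-M\|q\gamma\|=\epsilon>0.$$
\end{enumerate}

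Combining this with the upper bound yields $\epsilon<qAB^{-w}$, i.e. $B^{w}<Aq/\epsilon$. Since $B>1$, $\log B>0$, so this is equivalent to
$$w<\frac{\log(Aq/\epsilon)}{\log B},$$
which contradicts the assumed lower bound on $w$. Hence no such solution exists.

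No step should be a real obstacle. The only point needing care is justifying $\|q\gamma\|=|q\gamma-p|$ from the convergent property (the standard bound $|q\gamma-p|<1/q_{next}\le 1/q$ suffices). The hypothesis $q>6M$ is not needed for the logical argument. Its role is practical: it makes it likely that $\|q\gamma\|$ is small compared with $1/M$, so that $\epsilon>0$ actually holds in applications. The condition $\epsilon>0$ is what the argument uses, and it is assumed outright.
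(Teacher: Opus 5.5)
Your proof is correct and is the standard argument for this variant of the Dujella--Peth{\H{o}} lemma; the paper gives no proof of its own and only cites \cite{r7}. As in that argument, you multiply by $q$, split $uq\gamma-vq+\mu q=\bigl(\mu q-(vq-up)\bigr)+u(q\gamma-p)$, and bound from below by $\|\mu q\|-M\|q\gamma\|=\epsilon$.

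One small correction to your closing remark. Your step $|q\gamma-p|<1/q\le 1/2$ needs $q\ge 2$, since for $q=1$ a convergent can have $|q\gamma-p|>1/2$, and you get $q\ge 2$ from $q>6M$. To make the argument genuinely independent of that hypothesis, take $p$ to be the integer nearest to $q\gamma$ instead of the convergent numerator.
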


\section{Proof of the main theorems}

\begin{proof}[Proof of Theorem \ref{Theorem 2}] The proof is divided into the following eight cases depending on the value $k$.

Case-1: Let $k=3$. If $\{S_1,S_b\}=\{1,S_b\}$ is a $D(3)$-pair, then $S_b+3=m^2$, where $m$ is some integer. By Lemma \ref{Lemma 1}, we get $X^2-6Y^2=10$, where $X=2m$, $Y=2b-1$. Since $(X,Y)=(4,1)$ is a solution of $X^2-6Y^2=10$, by Lemma \ref{Lemma 2}, we have infinitely many $b$.

Case-2: Let $k=8$. If $\{S_1,S_b\}=\{1,S_b\}$ is a $D(8)$-pair, then $S_b+8=m^2$, where $m$ is some integer. By Lemma \ref{Lemma 1}, we get $X^2-6Y^2=30$, where $X=2m$, $Y=2b-1$. Since $(X,Y)=(6,1)$ is a solution of $X^2-6Y^2=30$, by Lemma \ref{Lemma 2}, we have infinitely many $b$.

Case-3: Let $k=12$. If $\{S_1,S_b\}=\{1,S_b\}$ is a $D(12)$-pair, then $S_b+12=m^2$, where $m$ is some integer. By Lemma \ref{Lemma 1}, we get $X^2-6Y^2=46$, where $X=2m$, $Y=2b-1$. Since $(X,Y)=(10,3)$ is a solution of $X^2-6Y^2=46$, by Lemma \ref{Lemma 2}, we have infinitely many $b$.

Case-4: Let $k=11$. If possible, assume that $\{S_a,S_b\}$ is a $D(11)$-pair, then $S_aS_b+11=m^2$, where $m$ is some integer. But $S_n\equiv 1,10,13,19\pmod{27}$, which implies that $S_aS_b+11\equiv 3,6,12,15,18,21,24 \pmod{27}$. On the other hand $m^2\equiv 0,1,4,7,9,10,13,16,19,22,25\pmod{27}$. Therefore, $S_aS_b+11$ cannot be a perfect square for any positive integers $a,b$.

Case-5: Let $k=-1$. If $\{S_1,S_b\}=\{1,S_b\}$ is a $D(-1)$-pair, then $S_b-1=m^2$, where $m$ is some integer. By Lemma \ref{Lemma 1}, we get $X^2-6Y^2=-6$, where $X=2m$, $Y=2b-1$. Since $(X,Y)=(12,5)$ is a solution of $X^2-6Y^2=-6$, by Lemma \ref{Lemma 2}, we have infinitely many $b$.

Case-6: Let $k=-4$. If $\{S_1,S_b\}=\{1,S_b\}$ is a $D(-4)$-pair, then $S_b-4=m^2$, where $m$ is some integer. By Lemma \ref{Lemma 1}, we get $X^2-6Y^2=-18$, where $X=2m$, $Y=2b-1$. Since $(X,Y)=(6,3)$ is a solution of $X^2-6Y^2=-18$, by Lemma \ref{Lemma 2}, we have infinitely many $b$.

Case-7: Let $k=-9$. If $\{S_1,S_b\}=\{1,S_b\}$ is a $D(-9)$-pair, then $S_b-9=m^2$, where $m$ is some integer. By Lemma \ref{Lemma 1}, we get $X^2-6Y^2=-38$, where $X=2m$, $Y=2b-1$. Since $(X,Y)=(16,7)$ is a solution of $X^2-6Y^2=-38$, by Lemma \ref{Lemma 2}, we have infinitely many $b$.

Case-8: Let $k=-12$. If $\{S_1,S_b\}=\{1,S_b\}$ is a $D(-12)$-pair, then $S_b-12=m^2$, where $m$ is some integer. By Lemma \ref{Lemma 1}, we get $X^2-6Y^2=-50$, where $X=2m$, $Y=2b-1$. Since $(X,Y)=(10,5)$ is a solution of $X^2-6Y^2=-50$, by Lemma \ref{Lemma 2}, we have infinitely many $b$.
\end{proof}

\begin{proof}[Proof of Theorem \ref{Theorem 3}]
Observe that
\begin{align}\label{eq a,b}
S_aS_b+k=\big(6ab-3(b+a)+1\big)^2-3(b-a)^2+k.    
\end{align}
If $b=a+1$, then we have $3(b-a)^2-k=0$ only for $k=3$. Therefore, $S_aS_{a+1}+k$ is not a perfect square for $k=8,12$ as from (\ref{eq a,b}), we have
$$(6a^2-2)^2<S_aS_{a+1}+k<(6a^2-1)^2,$$
and  $S_aS_{a+1}+k$ is not a perfect square for $k=-1,-4,-9,-12$ as from (\ref{eq a,b}), we have
$$(6a^2-3)^2<S_aS_{a+1}+k<(6a^2-2)^2.$$

For $b=a+2$, we have $3(b-a)^2-k=0$ only for $k=12$. Therefore, $S_aS_{a+2}+k$ is not a perfect square for $k=3,8,-1,-4,-9,-12$ as from (\ref{eq a,b}), we have
$$(6a^2+6a-6)^2<S_aS_{a+2}+k<(6a^2+6a-5)^2.$$
Now, for $b\geq a+3$, if $a=2$, then $4a+2=10$. Then for $5\leq b\leq 9$, it is easy to check that $\{S_2,S_9\}$ is a $D(-4)$-pair and for $k=3,8,12,-1,-9,-12$, there is no  $5\leq b\leq 9$ such that $\{S_2,S_b\}$ is a $D(k)$-pair.\\
Now we claim that if $a+3\leq b\leq 4a+1$ with $a\geq3$, then $\{S_a,S_b\}$ is not a $D(k)$-pair for $k=3,8,12,-1,-4,-9,-12$. To prove the claim, it suffices to prove that $S_aS_b+k$ is not a perfect square for $a+3\leq b\leq 4a+1$ with $a\geq3$ and $k=3,8,12,-1,-4,-9,-12$.

Set $d=b-a$, then $3\leq d\leq 3a+1$. Let us consider the function $f_a:[3,3a+1]\to \mathbb{R}$ defined by $f_a(d)=12ad+12a^2-6d-12a-3d^2+k+1$. Then, since $f''_a(d)=-6<0$ and for $a\geq3$
$$f_a(3)=12a^2+24a+k-44>0,\quad f_a(3a+1)=21a^2-36a+k-8>0,$$
we have $f_a(d)>0$ that is $2\big(6ab-3(a+b)\big)+1>3d^2-k$ for all $3\leq d\leq 3a+1$. It follows from (\ref{eq a,b}) that
$$\big(6ab-3(b+a)\big)^2<S_aS_b+k<\big(6ab-3(b+a)+1\big)^2.$$
This completes the proof.
\end{proof}

\begin{proof}[Proof of Theorem \ref{Theorem 4}] The proof is divided into the following seven cases depending on the value $k$.

Case-1: Let $k=3$. Then for $a\geq2$, we have
$$S_aS_{a+1}+3=(6a^2-2)^2.$$
It follows that $\{S_a,S_{a+1}\}$ is a $D(3)$-pair for $a\geq2$.

Case-2: Let $k=8$. If $\{S_4,S_b\}=\{73,S_b\}$ is a $D(8)$-pair, then $73S_b+8=m^2$, where $m$ is some integer. By Lemma \ref{Lemma 1}, we get $X^2-438Y^2=-114$, where $X=2m$, $Y=2b-1$. Since $(X,Y)=(18,1)$ is a solution of $X^2-438Y^2=-114$, by Lemma \ref{Lemma 2}, we have infinitely many $b$.

Case-3: Let $k=12$. Then for $a\geq2$, we have
$$S_aS_{a+2}+12=(6a^2+6a-5)^2.$$
It follows that $\{S_a,S_{a+2}\}$ is a $D(12)$-pair for $a\geq2$.

Case-4: Let $k=-1$. If $\{S_3,S_b\}=\{37,S_b\}$ is a $D(-1)$-pair, then $37S_b-1=m^2$, where $m$ is some integer. By Lemma \ref{Lemma 1}, we get $X^2-222Y^2=-78$, where $X=2m$, $Y=2b-1$. Since $(X,Y)=(12,1)$ is a solution of $X^2-222Y^2=-78$, by Lemma \ref{Lemma 2}, we have infinitely many $b$.

Case-5: Let $k=-4$. If $\{S_2,S_b\}=\{13,S_b\}$ is a $D(-4)$-pair, then $13S_b-4=m^2$, where $m$ is some integer. By Lemma \ref{Lemma 1}, we get $X^2-78Y^2=-42$, where $X=2m$, $Y=2b-1$. Since $(X,Y)=(6,1)$ is a solution of $X^2-78Y^2=-42$, by Lemma \ref{Lemma 2}, we have infinitely many $b$.

Case-6: Let $k=-9$. If $\{S_2,S_b\}=\{13,S_b\}$ is a $D(-9)$-pair, then $13S_b-9=m^2$, where $m$ is some integer. By Lemma \ref{Lemma 1}, we get $X^2-78Y^2=-62$, where $X=2m$, $Y=2b-1$. Since $(X,Y)=(4,1)$ is a solution of $X^2-78Y^2=-62$, by Lemma \ref{Lemma 2}, we have infinitely many $b$.

Case-7: Let $k=-12$. If $\{S_2,S_b\}=\{13,S_b\}$ is a $D(-12)$-pair, then $13S_b-12=m^2$, where $m$ is some integer. By Lemma \ref{Lemma 1}, we get $X^2-78Y^2=-74$, where $X=2m$, $Y=2b-1$. Since $(X,Y)=(2,1)$ is a solution of $X^2-78Y^2=-74$, by Lemma \ref{Lemma 2}, we have infinitely many $b$.
\end{proof}

\begin{proof}[Proof of Theorem \ref{Theorem 5}]
 Note that
 \begin{align*}
    S_n\pmod{8}=\begin{cases}
        1\quad \text{if}\quad  n\equiv 0,1 \pmod{4},\\
          5\quad \text{if}\quad  n\equiv 2,3 \pmod{4}.
    \end{cases}
 \end{align*}
 Therefore,
  \begin{align*}
    S_nS_m\pmod{8}=\begin{cases}
        1\quad \text{if}\quad  S_n\equiv S_m \pmod{8},\\
          5\quad \text{if}\quad  S_n\not\equiv S_m\pmod{8}.
    \end{cases}
    \end{align*}
Since $k\equiv 12,20 \pmod{24}$, in particular, we have $k\equiv 4 \pmod{8}$. It follows that
     \begin{align*}
    S_nS_m+k\pmod{8}=\begin{cases}
        5\quad \text{if}\quad  S_n\equiv S_m \pmod{8},\\
          1\quad \text{if}\quad  S_n\not\equiv S_m\pmod{8}.
    \end{cases}
    \end{align*}
    Since $m^2\equiv 0,1,4 \pmod{8}$, if $\{S_a,S_b,S_c\}$ is $D(k)$-triple, then we must have $S_i\not\equiv S_j \pmod{8},$
for all $i\neq j$, $i,j\in\{a,b,c\}$, but this is impossible as either $S_a\equiv S_b\pmod{8}$ or  $S_a\equiv S_c\pmod{8}$.\\
Again, observe that
 \begin{align*}
    S_n\pmod{9}=\begin{cases}
        1\quad \text{if}\quad  n\equiv 0,1 \pmod{3},\\
          4\quad \text{if}\quad  n\equiv 2 \pmod{3}.
    \end{cases}
 \end{align*}
 Therefore,
  \begin{align*}
    S_nS_m\pmod{9}=\begin{cases}
     7\quad \text{if}\quad  S_n\equiv S_m\equiv 4 \pmod{9},\\
        1\quad \text{if}\quad  S_n\equiv S_m\equiv 1 \pmod{9},\\
          4\quad \text{if}\quad  S_n\not\equiv S_m\pmod{9}.
    \end{cases}
    \end{align*}
Since $k\equiv 23,32 \pmod{36}$, in particular, we have $k\equiv 5 \pmod{9}$. It follows that
     \begin{align*}
    S_nS_m+k\pmod{9}=\begin{cases}
     3\quad \text{if}\quad  S_n\equiv S_m\equiv 4 \pmod{9},\\
        6\quad \text{if}\quad  S_n\equiv S_m\equiv 1 \pmod{9},\\
          0\quad \text{if}\quad  S_n\not\equiv S_m\pmod{9}.
    \end{cases}
    \end{align*}
    Since $m^2\equiv 0,1,4,7 \pmod{9}$, if $\{S_a,S_b,S_c\}$ is $D(k)$-triple, then we must have $S_i\not\equiv S_j \pmod{9},$
for all $i\neq j$, $i,j\in\{a,b,c\}$, but this is impossible as either $S_a\equiv S_b\pmod{9}$ or  $S_a\equiv S_c\pmod{9}$. This completes the proof.
\end{proof}
\begin{proof}[Proof of Corollary \ref{Corollary 1}]
The proof follows from Theorem \ref{Theorem 5}.
\end{proof}
\begin{proof}[Proof of Theorem \ref{Theorem 6}]
If possible, assume that $\{S_b,S_a,S_{a+1}\}$ is a $D(3)$-triple. Then $\{S_b,S_a\}$ and $\{S_b, S_{a+1}\}$ both are $D(3)$-pair. Set
\begin{align}\label{eq b}
S_bS_a+3=c^2,\quad S_bS_{a+1}+3=d^2,    
\end{align}
where $c,d$ are some integers. Let us consider the following two cases.\\
Case-1: Let $b=1$. Then using (\ref{eq b}) and Lemma \ref{Lemma 1}, we have two corresponding generalized Pell equations
\begin{align}\label{eq b1}
X_1^2-6Y_1^2=10,\quad X_2^2-6Y_2^2=10,    
\end{align}
where $(X_1,Y_1)=(2c,2a-1)$, $(X_2,Y_2)=(2d, 2a+1)$. Observe that $Y_2=Y_1+2$. Since $Y_1\geq 3$, we have
$$10+6Y_1^2<(\sqrt{6}Y_1+1)^2,$$
it follows that
$$\sqrt{6}Y_1<X_1<\sqrt{6}Y_1+1,\quad \sqrt{6}(Y_1+2)<X_2<\sqrt{6}(Y_1+2)+1.$$
Therefore,
$$2\sqrt6-1<X_2-X_1<2\sqrt6+1,$$
that is $X_2-X_1=4$ or $X_2-X_1=5$. Note that $X_2-X_1=5$ is impossible as both $X_i$ are even. If $X_2-X_1=4$, then from (\ref{eq b1}), we get
$$X_1^2-6Y_1^2=(X_1+4)^2-6(Y_1+2)^2,$$
which gives $X_1=3Y_1+1$, that is, $c=3a-1$. Then from
$$S_a+3=6a(a-1)+4=(3a-1)^2,$$
we get $a=1$, which contradicts the fact that $a\geq2$.\\
Case-2: Let $b\geq2$ and set $D=9(2b-1)^2-3$, $R=3(2b-1)^2-13$. Since $a>b$, we can assume that $a\geq b+2$ as if $a=b+1$, then $\{S_b,S_{b+1},S_{b+2}\}$ cannot be a $D(3)$-triple by Theorem \ref{Theorem 3}. Now, from (\ref{eq b}), we have
\begin{align}\label{eq b>1}
 X_1^2-DY_1^2=-R,\quad  X_2^2-DY_2^2=-R,  
\end{align}
where $(X_1,Y_1)=(2c,2a-1),(X_2,Y_2)=(2d,2a+1)$. Again, we have that $Y_2=Y_1+2$. For $b\geq2$, we have $R>0$. Therefore, for $i=1,2$, we have
$$X_i^2-(\sqrt{D}Y_i)^2=-R,$$
which gives
\begin{align}\label{eq i}
X_i=\sqrt{D}Y_i-\frac{R}{\sqrt{D}Y_i+X_i}.    
\end{align}
Since $Y_2>Y_1$ and $X_2>X_1$, we get
\begin{align}\label{inequality}
\sqrt{D}Y_2+X_2>\sqrt{D}Y_1+X_1.   
\end{align}
Using (\ref{eq i}), and the fact that $Y_2-Y_1=2$, we get
\begin{align}\label{diff}
 X_2-X_1=2\sqrt{D}+R\big(\frac{1}{\sqrt{D}Y_1+X_1}-\frac{1}{\sqrt{D}Y_2+X_2}\big),   
\end{align}

using (\ref{inequality}), and the above equation, we get
\begin{align}\label{first}
X_2-X_1>2\sqrt{D}.    
\end{align}
For $b\geq2$, we have
$$\big(3(2b-1)-1\big)^2=9(2b-1)^2-6(2b-1)+1<9(2b-1)^2-3=D<\big(3(2b-1)\big)^2,$$
from which we obtain
\begin{align}\label{bound}
6(2b-1)-2<2\sqrt{D}<6(2b-1).    
\end{align}
Now, using (\ref{first}) and (\ref{bound}), we get
\begin{align}\label{first 1}
  6(2b-1)-2<X_2-X_1.  
\end{align}
Since $a\geq b+2$ and $Y_1=2a-1$, we have $Y_1=2a-1\geq 2b+2$. Therefore, from (\ref{diff}), we have
$$R\big(\frac{1}{\sqrt{D}Y_1+X_1}-\frac{1}{\sqrt{D}Y_2+X_2}\big)<\frac{R}{\sqrt{D}Y_1+X_1}<\frac{3(2b-1)^2}{\big(3(2b-1)-1\big)(2b+2)}\leq 1,$$
which together with (\ref{diff}) imply that
$$X_2-X_1<2\sqrt{D}+1<6(2b-1)+1.$$
Consequently, from (\ref{first 1}), we obtain
$$6(2b-1)-2<X_2-X_1<2\sqrt{D}+1<6(2b-1)+1,$$
it follows that $X_2-X_1=6(2b-1)$ as $X_2-X_1$ is even. Therefore, from (\ref{eq b>1}), we get
$$X_1^2-DY_1^2=(6(2b-1)+X_1)^2-D(Y_1+2)^2,$$
which gives
$$3(2b-1)X_1+9(2b-1)^2=D(Y_1+1)=2aD.$$
Therefore
\begin{align*}
X_1=\frac{2Da-9(2b-1)^2}{3(2b-1)}=\frac{2a\big(3(2b-1)^2-1\big)-3(2b-1)^2}{2b-1},
\end{align*}
since $X_1=2c$, we must have $X_1\equiv 0 \pmod{2}$, but
$$2a\big(3(2b-1)^2-1\big)-3(2b-1)^2\not\equiv 0\pmod{2},$$
which is absurd.\\
Hence $\{S_b,S_a,S_{a+1}\}$ cannot be a $D(3)$-triple. This completes the proof.
\end{proof}
\begin{proof}[Proof of Theorem \ref{Theorem a+1}]
 If $\{a,a+1, 12(2a+1)\}$ is a star Diophantine $D(1)$-triple, then
 $$a(a+1)+1=S_n,\quad 12a(2a+1)+1=S_m,\quad 12(a+1)(2a+1)+1=S_r,$$
 where $n,m,r$ are some integers. Since $12a(2a+1)+1=S_m$ implies $m=2a+1$, and $12(a+1)(2a+1)+1=S_r$ implies  $r=2a+2$, it is sufficient to find $a,n$ such that $a(a+1)+1=S_n$. By Lemma \ref{Lemma 3}, $a(a+1)+1=S_n$ is equivalent to
$$X^2-6Y^2=-5,$$
where $X=2a+1$ and $Y=2n-1$. Observe that $(x,y)=(17,7)$ is a solution of $X^2-6Y^2=-5$. Therefore, by Lemma \ref{Lemma 4}, $(\xi_{1,k},\xi_{2,k})$ is also a positive integer solution of $X^2-6Y^2=-5$, where $\xi_{1,0}=17,\xi_{2,0}=7$ and for $k\geq1$
$$\xi_{1,k}=5\xi_{1,k-1}+12\xi_{2,k-1},\quad \xi_{2,k}=2\xi_{1,k-1}+5\xi_{2,k-1}.$$
Since $X=2a+1$, for each
$$a\in\bigg\{ \frac{5\xi_{1,k-1}+12\xi_{2,k-1}-1}{2}:k\geq 1,~~\xi_{1,0}=17,\xi_{2,0}=7\bigg\},$$
$\{a,a+1, 12(2a+1)\}$ is a star Diophantine $D(1)$-triple.\\
If possible, assume that $\{a,a+1,12(2a+1),b\}$ is a star Diophantine $D(1)$-quadruple with $12\nmid b$. Then, we have some integers $n,m$ such that
$$ab+1=S_n=6n(n-1)+1,\quad (a+1)b+1=S_m=6m(m-1)+1,$$
since $6n(n-1), 6m(m-1)\equiv 0 \pmod{12}$, we must have $12\mid b$, but this contradicts the fact that $12\nmid b$. Hence $\{a,a+1,12(2a+1),b\}$ cannot be a star Diophantine $D(1)$-quadruple with $12\nmid b$. This completes the proof.
\end{proof}
 \begin{proof}[Proof of Theorem \ref{Theorem a+2}]
 If $\{a,a+2,6(a+1)\}$ is a star Diophantine $D(1)$-triple, then
 $$a(a+2)+1=S_n,\quad 6a(a+1)+1=S_m,\quad 6(a+2)(a+1)+1=S_r,$$
 where $n,m,r$ are some integers. Since $6a(a+1)+1=S_m$ implies $m=a+1$, and $6(a+2)(a+1)+1=S_r$ implies  $r=a+2$, it is sufficient to find $a,n$ such that $a(a+2)+1=S_n$. By Lemma \ref{Lemma 3}, $a(a+2)+1=S_n$ is equivalent to
$$X^2-6Y^2=-2,$$
where $X=2a+2$ and $Y=2n-1$. Observe that $(x,y)=(22,9)$ is a solution of $X^2-6Y^2=-2$. Therefore, by Lemma \ref{Lemma 4}, $(\xi_{1,k},\xi_{2,k})$ is also a positive integer solution of $X^2-6Y^2=-2$, where $\xi_{1,0}=22,\xi_{2,0}=9$ and for $k\geq1$
$$\xi_{1,k}=5\xi_{1,k-1}+12\xi_{2,k-1},\quad \xi_{2,k}=2\xi_{1,k-1}+5\xi_{2,k-1}.$$
Since $X=2a+2$, for each
$$a\in \bigg\{\frac{5\xi_{1,k-1}+12\xi_{2,k-1}-2}{2}:k\geq1,~~\xi_{1,0}=22,\xi_{2,0}=9\bigg\},$$
$\{a,a+2,6(a+1)\}$ is a star Diophantine $D(1)$-triple.\\
If possible, assume that $\{a,a+2,6(a+1),b\}$ is a star Diophantine $D(1)$-quadruple with $6\nmid b$. Then, we have some integers $n,m$ such that
$$ab+1=S_n,\quad (a+2)b+1=S_m,$$
since $6n(n-1), 6m(m-1)\equiv 0 \pmod{12}$, we must have $6\mid b$, but this contradicts the fact that $6\nmid b$. Hence $\{a,a+2,6(a+1),b\}$ cannot be a star Diophantine $D(1)$-quadruple with $6\nmid b$. This completes the proof.
\end{proof}

 \begin{proof}[Proof of Theorem \ref{Theorem a+3}]
 If $\{a,a+3,\dfrac{8a+12}{3}\}$ is a star Diophantine $D(1)$-triple, then
 $$a(a+3)+1=S_n,\quad \frac{a(8a+12)}{3}+1=S_m,\quad \frac{(a+3)(8a+12)}{3}+1=S_r,$$
 where $n,m,r$ are some integers. Since $\dfrac{a(8a+12)}{3}+1=S_m$ implies $m=\dfrac{2a+3}{3}$, and $\dfrac{(a+3)(8a+12)}{3}+1=S_r$ implies  $r=\dfrac{2a+6}{3}$, it is sufficient to find $a,n$ such that $a(a+3)+1=S_n$ with $3\mid a$. By Lemma \ref{Lemma 3}, $a(a+3)+1=S_n$ is equivalent to
$$X^2-6Y^2=3,$$
where $X=2a+3$ and $Y=2n-1$. Observe that $(x,y)=(27,11)$ is a solution of $X^2-6Y^2=3$. Therefore, by Lemma \ref{Lemma 4}, $(\xi_{1,k},\xi_{2,k})$ is also a positive integer solution of $X^2-6Y^2=3$, where $\xi_{1,0}=27,\xi_{2,0}=11$ and for $k\geq1$
$$\xi_{1,k}=5\xi_{1,k-1}+12\xi_{2,k-1},\quad \xi_{2,k}=2\xi_{1,k-1}+5\xi_{2,k-1}.$$
Since $X=2a+3$, and $3\mid X^2-6Y^2$, we have $3\mid a$. Hence for each
$$a\in \bigg\{\frac{5\xi_{1,k-1}+12\xi_{2,k-1}-3}{2}:k\geq1,~~\xi_{1,0}=27,\xi_{2,0}=11\bigg\},$$
$\{a,a+3,\dfrac{8a+12}{3}\}$ is a star Diophantine $D(1)$-triple.\\
If possible, assume that $\{a,a+3,\dfrac{8a+12}{3},b\}$ is a star Diophantine $D(1)$-quadruple with $4\nmid b$. Then, we have some integers $n,m$ such that
$$ab+1=S_n,\quad (a+3)b+1=S_m,$$
since $6n(n-1), 6m(m-1)\equiv 0 \pmod{12}$, we must have $4\mid b$, but this contradicts the fact that $4\nmid b$. Hence $\{a,a+3,\dfrac{8a+12}{3},b\}$ cannot be a star Diophantine $D(1)$-quadruple with $4\nmid b$. This completes the proof.
\end{proof}

\begin{proof}[Proof of Theorem \ref{Theorem a+4}]
If $\{a,a+4,\dfrac{3a+6}{2}\}$ is a star Diophantine $D(1)$-triple, then
 $$a(a+4)+1=S_n,\quad \frac{3a(a+2)}{2}+1=S_m,\quad \frac{3(a+4)(a+2)}{2}+1=S_r,$$
 where $n,m,r$ are some integers. Since $\dfrac{3a(a+2)}{2}+1=S_m$ implies $m=\dfrac{a+2}{2}$, and $\dfrac{3(a+4)(a+2)}{2}+1=S_r$ implies  $r=\dfrac{a+4}{2}$, it is sufficient to find $a,n$ such that $a(a+4)+1=S_n$ with $2\mid a$. By Lemma \ref{Lemma 3}, $a(a+4)+1=S_n$ is equivalent to
$$X^2-6Y^2=10,$$
where $X=2a+4$ and $Y=2n-1$. Observe that $(x,y)=(8,3)$ is a solution of $X^2-6Y^2=10$. Therefore, by Lemma \ref{Lemma 4}, $(\xi_{1,k},\xi_{2,k})$ is also a positive integer solution of $X_1^2-6Y_1^2=10$, where $\xi_{1,0}=8,\xi_{2,0}=3$ and for $k\geq1$
$$\xi_{1,k}=5\xi_{1,k-1}+12\xi_{2,k-1},\quad \xi_{2,k}=2\xi_{1,k-1}+5\xi_{2,k-1}.$$
Since $X=2a+4$, and $4\mid 10+6Y^2=X^2$, we have $2\mid a$. Hence for each
$$a\in \bigg\{\frac{5\xi_{1,k-1}+12\xi_{2,k-1}-4}{2}: k\geq 1,~~\xi_{1,0}=8,\xi_{2,0}=3\bigg\},$$
$\{a,a+4,\dfrac{3a+6}{2}\}$ is a star Diophantine $D(1)$-triple.\\
If possible, assume that $\{a,a+4,\dfrac{3a+6}{2},b\}$ is a star Diophantine $D(1)$-quadruple with $3\nmid b$. Then, we have some integers $n,m$ such that
$$ab+1=S_n,\quad (a+4)b+1=S_m,$$
Since $6n(n-1), 6m(m-1)\equiv 0 \pmod{12}$, we must have $3\mid b$, but this contradicts the fact that $3\nmid b$. Hence $\{a,a+4,\dfrac{3a+6}{2},b\}$ cannot be a star Diophantine $D(1)$-quadruple with $3\nmid b$. This completes the proof.
\end{proof}
\begin{proof}[Proof of Theorem \ref{Theorem 1.7}]
If $\{4,108,a\}$ is a star Diophantine $D(1)$-triple, then we have some integers $n,m$ such that
 $$4a+1=S_n,\quad 108a+1=S_m.$$
 Using Lemma \ref{Lemma 3}, the above equations can be written as
 $$X_1^2-6Y_1^2=(a-4)^2-6,\quad X_2^2-6Y_2^2=(a-108)^2-6,$$
 where $X_1=a+4$, $Y_1=2n-1$ and $X_2=a+108$, $Y_2=2m-1$. Then
 \begin{align}\label{eq for a}
 a=\frac{3Y_1^2-3}{8},\quad a=\frac{Y_2^2-1}{72}.    
 \end{align}
From the above equations, we get
 \begin{align*}
 24a+9=9Y_1^2,\quad 72a+1=Y_2^2,    
 \end{align*}
it follows that
 \begin{align}\label{eq 15}
 Y_2^2-27Y_1^2=-26.    
 \end{align}

Since $\alpha=26+5\sqrt{27}$ is a fundamental solution of the equation $x^2-27y^2=1$, by Theorem \ref{Nagell}, for each fundamental solution $x^*+y^*\sqrt{27}$ of (\ref{eq 15}) satisfies
$$0\leq y^*\leq\frac{5}{\sqrt{2(26-1)}}\sqrt{26}<4,$$
$$|x^*|\leq\sqrt{\frac{1}{2}(26-1)26}<19.$$
It is easy to check that (\ref{eq 15}) is satisfied only for $(x^*,y^*)=(\pm1,1)$. It follows that the fundamental solutions of (\ref{eq 15}) are $(1,1)$ and $(-1,1)$. 
 Since $26+5\sqrt{27}$ has norm $1$ in $\mathbb{Z}[\sqrt{27}]$, we can observe that  for $k\geq0$, $(\xi_{1,k},\xi_{2,k})$ and $(\xi_{1,k}',\xi_{2,k}')$ satisfy (\ref{eq 15}), where
\begin{align}\label{eq 5}
 \xi_{1,k}+\xi_{2,k}\sqrt{27}=\big(-1+\sqrt{27}\big)(26+5\sqrt{27})^k,
    \end{align}
 and
 \begin{align}\label{eq 5 new}
\xi_{1,k}'+\xi_{2,k}'\sqrt{27}=\big(1+\sqrt{27}\big)(26+5\sqrt{27})^k.    
 \end{align}
Now, iterating (\ref{eq 5}) and (\ref{eq 5 new}), for $k\geq1$, we get 
 $$\xi_{1,k}=26\xi_{1,k-1}+135\xi_{2,k-1},\quad \xi_{2,k}=5\xi_{1,k-1}+26\xi_{2,k-1},$$
with $\xi_{1,0}=-1,\xi_{2,0}=1$, and
$$\xi_{1,k}'=26\xi_{1,k-1}'+135\xi_{2,k-1}',\quad \xi_{2,k}'=5\xi_{1,k-1}'+26\xi_{2,k-1}',$$
with $\xi_{1,0}'=1,\xi_{2,0}'=1$. Therefore, we have 
$$Y_1\in A_1=\{\xi_{2,k}: k\geq 0,~~\xi_{1,0}=-1,\xi_{2,0}=1\}$$
where $\xi_{1,k}=26\xi_{1,k-1}+135\xi_{2,k-1}, \xi_{2,k}=5\xi_{1,k-1}+26\xi_{2,k-1},$ and
$$Y_1\in A_2=\{\xi'_{2,k}: k\geq 0,~~\xi'_{1,0}=1,\xi'_{2,0}=1\},$$
where $\xi'_{1,k}=26\xi'_{1,k-1}+135\xi'_{2,k-1},\quad \xi'_{2,k}=5\xi'_{1,k-1}+26\xi'_{2,k-1}.$
Since from (\ref{eq for a}), we have $a=\dfrac{3Y_1^2-3}{8}$, and $\xi_{2,0},\xi_{2,0}'=1$ gives $a=0$, it follows that for each
$$a \in \bigg\{ \frac{3\left(\xi_{2,k}^{2}-1\right)}{8} : k\ge 1, \xi_{1,0}=-1, \xi_{2,0}=1 \bigg\}
\cup
\bigg\{ \frac{3\left(\xi_{2,k}^{2}-1\right)}{8} : k\ge 1, \xi_{1,0}=1, \xi_{2,0}=1 \bigg\},$$
where
$$\xi_{1,k}=26\,\xi_{1,k-1}+135\,\xi_{2,k-1},\quad
\xi_{2,k}=5\,\xi_{1,k-1}+26\,\xi_{2,k-1},$$
$\{4,108,a\}$ is a star Diophantine $D(1)$-triple. This completes the proof.
\end{proof}
\begin{proof}[Proof of Theorem \ref{Theorem 1.8}] If $\{4,108,165,a\}$ is a star Diophantine $D(1)$-quadruple, then we have some integers $n,m,r$ such that
$$4a+1=S_n,\quad 108a+1=S_m,\quad 165a+1=S_r.$$
Then, using Lemma \ref{Lemma 3}, the above three equations can be written as
$$X_1^2-6Y_1^2=(a-4)^2-6,\quad X_2^2-6Y_2^2=(a-108)^2-6,\quad X_3^2-6Y_3^2=(a-165)^2-6,$$
where $X_1=a+4$, $Y_1=2n-1$, $X_2=a+108$, $Y_2=2m-1$, $X_3=a+165$, $Y_3=2r-1$. Then
  $$a=\frac{3Y_1^2-3}{8},\quad a=\frac{Y_2^2-1}{72},\quad a=\frac{Y_3^2-1}{110}.$$  
Therefore, from the above three equations, we obtain
$$Y_2^2-27Y_1^2=-26,\quad 55Y_2^2-36Y_3^2=19.$$
Let us consider the following two cases.\\
Case-1: For the generalized Pell equation $Y_2^2-27Y_1^2=-26$, from the proof of Theorem \ref{Theorem 1.7}, we get
$$Y_2\in A_1=\{\xi_{1,k}: k\geq 0,~~\xi_{1,0}=-1,\xi_{2,0}=1\}$$
where $\xi_{1,k}=26\xi_{1,k-1}+135\xi_{2,k-1}, \xi_{2,k}=5\xi_{1,k-1}+26\xi_{2,k-1},$ and
$$Y_2\in A_2=\{\xi'_{1,k}: k\geq 0,~~\xi'_{1,0}=1,\xi'_{2,0}=1\},$$
where $\xi'_{1,k}=26\xi'_{1,k-1}+135\xi'_{2,k-1},\quad \xi'_{2,k}=5\xi'_{1,k-1}+26\xi'_{2,k-1}.$
Since $Y_2=2m-1\geq1$, we exclude $\xi_{1,0}$ and consider
$$Y_2\in A_1=\{\xi_{1,k}: k\geq 1,~~\xi_{1,0}=-1,\xi_{2,0}=1\}.$$
Observe that for $k\geq0$, we have
$$\xi_{1,k+2}=52\xi_{1,k+1}-\xi_{1,k},\quad \xi'_{1,k+2}=52\xi'_{1,k+1}-\xi'_{1,k}.$$ 
Therefore,
 \begin{align*}
   \xi_{1,k} \pmod{8}=\begin{cases}
        5\quad \text{if}\quad  k\equiv 1,2 \pmod{4},\\
          7\quad \text{if}\quad  k\equiv 0,3 \pmod{4}.
    \end{cases}
 \end{align*}

Also, note that for $k\geq0$, we have
$$\xi'_{1,k}=\frac{(1+\sqrt{27})\alpha^{k}+(1-\sqrt{27})\bar{\alpha}^{k}}{2},\quad \text{where}\quad \alpha=26+5\sqrt{27}.$$
Case-2: The generalized Pell equation $55Y_2^2-36Y_3^2=19$ can be written as $Y_4^2-1980Y_2^2=-684$, where $Y_4=36Y_3$. Since $\beta=89+2\sqrt{1980}$ is a fundamental solution of the equation $x^2-1980y^2=1$, by Theorem \ref{Nagell}, for each fundamental solution $x^*+y^*\sqrt{1980}$ of $x^2-1980y^2=-684$ satisfies
$$0\leq y^*\leq\frac{2}{\sqrt{2(89-1)}}\sqrt{684}<4,$$
$$|x^*|\leq\sqrt{\frac{1}{2}(89-1)684}<174.$$

It is easy to check that $x^2-1980y^2=-684$ is satisfied only for $(x^*,y^*)=(\pm 36,1)$. Since $89+2\sqrt{1980}$ has norm $1$ in $\mathbb{Z}[\sqrt{1980}]$, we can observe that  for $k\geq0$, $(\eta_{1,k},\eta_{2,k})$ and $(\eta_{1,k}',\eta_{2,k}')$ satisfy $Y_4^2-1980Y_2^2=-684$, where
\begin{align}\label{eq 19}
 \eta_{1,k}+\eta_{2,k}\sqrt{1980}=\big(-36+\sqrt{1980}\big)(89+2\sqrt{1980})^k,
    \end{align}
 and
 \begin{align}\label{eq 20}
 \eta_{1,k}'+\eta_{2,k}'\sqrt{1980}=\big(36+\sqrt{1980}\big)(89+2\sqrt{1980})^k.    
 \end{align}
Now, iterating (\ref{eq 19}) and (\ref{eq 20}), for $k\geq1$, we get 
 $$\eta_{1,k}=89\eta_{1,k-1}+3960\eta_{2,k-1},\quad \eta_{2,k}=2\eta_{1,{k-1}}+89\eta_{2,{k-1}},$$
with $\eta_{1,0}=-36,\eta_{2,0}=1$, and
$$\eta_{1,k}'=89\eta_{1,k-1}'+3960\eta_{2,k-1}',\quad \eta_{2,k}'=2\eta_{1,{k-1}}'+89\eta_{2,{k-1}}',$$
with $\eta_{1,0}'=36,\eta_{2,0}'=1$. Therefore, we have 
$$Y_2\in B_1=\{\eta_{2,k}: k\geq 0,~~\eta_{1,0}=-36,\eta_{2,0}=1\},$$
where
$$\eta_{1,k}=89\eta_{1,k-1}+3960\eta_{2,k-1},\quad \eta_{2,k}=2\eta_{1,{k-1}}+89\eta_{2,{k-1}}.$$
Now, using the above two relations, we get
$$\eta_{2,k+2}=178\eta_{2,k+1}-\eta_{2,k},$$
thus, for $k\geq1$, we have $\eta_{2,k} \equiv 1\pmod{8}$. And
$$ Y_2\in B_2=\{\eta'_{2,k}: k\geq 0,~~\eta'_{1,0}=36,\eta'_{2,0}=1\},$$
where
 $$\eta'_{1,k}=89\eta'_{1,k-1}+3960\eta'_{2,k-1},\quad \eta'_{2,k}=2\eta'_{1,{k-1}}+89\eta'_{2,{k-1}}.$$
Now, using the above two relations, we get
 $$\eta'_{2,k+2}=178\eta'_{2,k+1}-\eta'_{2,k},$$
 thus, for $k\geq1$, we have  $\eta'_{2,k} \equiv 1\pmod{8}$. Therefore, $A_1\cap(B_1\cup B_2)=\emptyset$. Also, for $k\geq0$, we have
$$\xi_{2,k}=\frac{(-36+\sqrt{1980})\beta^{k}+(36+\sqrt{1980})\bar{\beta}^{k}}{2\sqrt{1980}},\quad \xi'_{2,k}=\frac{(36+\sqrt{1980})\beta^k-(36-\sqrt{1980})\bar{\beta}^k}{2\sqrt{1980}},$$
where $\beta=89+2\sqrt{1980}$.

First few elements of $A_2$, $B_1$, $B_2$ are the following:
$$A_2=\{1,161, 8371, 435131, 22618441, 1175723801,\ldots\},$$
$$B_1=\{1,17, 3025, 538433, 95838049, 17058634289,\ldots\},$$
$$B_2=\{1,161, 28657, 5100785, 907911073, 161603070209,\ldots\}.$$
Now, we prove that $A_2\cap B_2=\{1,161\}$ and $A_2\cap B_1=\{1\}$. Since $Y_2=1$ is impossible as if $Y_2=1$, then $a=0$, consequently, we get the unique $a=360$ corresponding to the value $Y_2=161$.
Set
$$C=(1+\sqrt{27})\sqrt{1980},\quad C'= (1-\sqrt{27})\sqrt{1980}, \quad D=(36+\sqrt{1980}),\quad D'=(36-\sqrt{1980}).$$
Consider the following two cases.\\
Case-3: Since $Y_2\neq 1$, we assume that $\xi'_{1,n}=\eta'_{2,m}$ for some $n,m\geq2$. Since $\beta^{n/2}<\xi'_{1,n}=\xi'_{2,m}<\beta^m$, we have $n<2m$.
Then
$$\frac{(1+\sqrt{27})\alpha^{n}+(1-\sqrt{27})\bar{\alpha}^{n}}{2}=\frac{D\beta^m-D'\bar{\beta}^m}{2\sqrt{1980}},$$
which implies that
$$C\alpha^n-D\beta^m=-C'\bar{\alpha}^n-D'\bar{\beta}^m.$$
Since $0<\bar{\alpha}^n<1$ and $0<\bar{\beta}^m<1$, it follows that
$$|C\alpha^n-D\beta^m|\leq |C'|\bar{\alpha}^n+|D'|\bar{\beta}^m<196.$$
Then
\begin{align}\label{eq 6}
    \left|\frac{C\alpha^n}{D\beta^m}-1\right|<\frac{196}{D\beta^m}<\frac{3}{\beta^m}.
\end{align}
Now, let us apply Theorem \ref{Thm2} with $(\gamma _{1},b_1)=(\dfrac{C}{D},1),~(\gamma _{2},b_2)=(\alpha ,n),~(\gamma_{3},b_3)=(\beta,-m)$, and 
$$\Lambda = \frac{C\alpha^n}{D\beta^m}-1.$$
We claim that $\Lambda\neq0$ for $n,m\geq2$. If possible, assume that $\Lambda=0$ for some $n,m\geq2$. Then $C\alpha^n=D\beta^m$. Since $\alpha=26+5\sqrt{27}$, we have $\alpha^n\in \mathbb{Q}(\sqrt{3})$ and since $\beta=89+12\sqrt{55}$, we have $\beta^m\in \mathbb{Q}(\sqrt{55})$. Now $C\alpha^n=D\beta^m,$ implies
$$(1+\sqrt{27})\alpha^n=\frac{D\beta^m}{6\sqrt{55}}\in \mathbb{Q}(\sqrt{55}),$$
consequently, we get $(1+\sqrt{27})\alpha^n\in \mathbb{Q}(\sqrt{3})\cap\mathbb{Q}(\sqrt{55})=\mathbb{Q}$, which is a contradiction.

Note that $\gamma_1,\gamma_2,\gamma_3\in \mathbb{Q}(\sqrt3,\sqrt{55})$, we set $d=[\mathbb{Q}(\sqrt3,\sqrt{55}):\mathbb{Q}]=4$. Set
$$B=2m\geq\max\{|b_1|,|b_2|,|b_3|\},$$
$$A_2=8>2\log(\alpha)\geq \max\{D h(\gamma_2),\,|\log \gamma_2|,\,0.16\}$$
$$A_3=11>2\log(\beta)\geq \max\{D h(\gamma_3),\,|\log \gamma_3|,\,0.16\}.$$
Since $h(\gamma_1)\leq h(C)+h(D)=\frac{1}{2}(\log51480+\log684)<9$, we set
$$A_1=36>\max\{D h(\gamma_1),\,|\log \gamma_1|,\,0.16\}.$$
From Theorem \ref{Thm2} and (\ref{eq 6}), we obtain
\begin{align*}
    \exp\big(-1.4\cdot 30^{6}\cdot 3^{4.5}\cdot 16\cdot(1+\log 4)\cdot A_1\cdot A_2\cdot A_3\cdot (1+\log2m)\big)<|\Lambda|<\frac{3}{\beta^{m}},
\end{align*}
it follows that
\begin{align*}
    -1.4\cdot 30^{6}\cdot 3^{4.5}\cdot 16\cdot(1+\log 4)\cdot A_1\cdot A_2\cdot A_3\cdot (1+\log2m)<\log3-m\log\beta,
\end{align*}
which gives
\begin{align*}
   m\log\beta&<\log3+1.4\cdot 30^{6}\cdot 3^{4.5}\cdot 16\cdot(1+\log 4)\cdot A_1\cdot A_2\cdot A_3\cdot (1+\log2m)\\
   &<2\cdot 10^{16}\cdot (1+\log2m).
\end{align*}
The above inequality implies that $m<1.6\cdot 10^{17}$. We now need to reduce the upper bound obtained for $n$. For this, we use Lemma \ref{Lem1}.

Let $z=\log(\dfrac{C}{D})+n\log\alpha-m\log\beta$. Observe that $z\neq0$ as $\Lambda\neq0$ for $n,m\geq2$. If $z>0$, then from (\ref{eq 6}), we have
$$0<z\leq e^{z}-1<\frac{3}{\beta^m}.$$
If $z<0$, then from (\ref{eq 6}), we have
$$|e^{z}-1|<\frac{3}{\beta^m}<\frac{1}{2}, \quad \text{for }m\geq1.$$
Since for any nonzero real number $x<0$, if $|e^{x}-1| <\dfrac{1}{2}$, we have $
|x| < 2|e^{x}-1|$. Therefore, for $m\geq1$, we have
$$|z|<2|e^z-1|<\frac{6}{\beta^m}.$$
From the above inequality, we get
\begin{align*}
\left|\log(\frac{C}{D})+n\log\alpha-m\log\beta\right| <\frac{6}{\beta^m},  
\end{align*}
After multiplying both sides of the above inequality by $\dfrac{1}{\log\beta}$, we obtain
\begin{align*}
\left|\frac{\log(\dfrac{C}{D})}{\log\beta}+n\frac{\log\alpha}{\log\beta}-m\right| <\frac{6}{\log\beta\cdot\beta^m}<\frac{2}{\beta^m}.  
\end{align*}
Now, let us apply Lemma \ref{Lem1} with
\begin{align*}
u=n,\quad \gamma=\frac{\log\alpha}{\log \beta},\quad v=m,\quad \mu=\frac{\log(\dfrac{C}{D})}{\log\beta},\quad A=2,\quad B=\beta,\quad w=m.
\end{align*}
Since $m<1.6\cdot 10^{17}$, and $u=n<2m<3.2\cdot 10^{17}$, we set $M=3.2\cdot 10^{17}$. Choosing  
$$\frac{p}{q}=\frac{p_{39}}{q_{39}}=\frac{446050159788405153046}{585015235298117452451},$$
we get $q_{39}>6M$ and
$$\epsilon=||\mu q||-M||\gamma q||=0.1886\ldots>0.$$
Therefore, by Lemma \ref{Lem1}, we have
\begin{align*}
    m<\frac{\log(Aq/\epsilon)}{\log B}<10.
\end{align*}
Therefore, $m\leq 9$. Now, for $2\leq m\leq 9$, we have $\xi'_{1,2}=\eta'_{2,2}=161$.\\
Case-4: Since $Y_2\neq 1$, we assume that $\xi'_{1,n}=\eta_{2,m}$ for some $n,m\ge2$.
Since $\beta^{n/2}<\xi'_{1,n}=\xi_{2,m}<\beta^m$, we have $n<2m$. Then
$$\frac{(1+\sqrt{27})\alpha^{n}+(1-\sqrt{27})\bar{\alpha}^{n}}{2}=\frac{-D'\beta^m+D\bar{\beta}^m}{2\sqrt{1980}},$$
which implies that
$$C\alpha^n+D'\beta^m=-C'\bar{\alpha}^n+D\bar{\beta}^m,$$

Since $0<\bar{\alpha}^n<1$ and $0<\bar{\beta}^m<1$, it follows that
$$|C\alpha^n+D'\beta^m|\leq |C'|\bar{\alpha}^n+D\bar{\beta}^m<268.$$
Then
\begin{align}\label{eq 10}
\left|\frac{C\alpha^n}{(-D')\beta^m}-1\right|<\frac{268}{|D'|\beta^m}<\frac{32}{\beta^m}.
\end{align}
Now, let us apply Theorem \ref{Thm2} with $(\gamma _{1},b_1)=(\dfrac{C}{-D'},1),~(\gamma _{2},b_2)=(\alpha ,n),~(\gamma_{3},b_3)=(\beta,-m)$ and
$$\Lambda = \frac{C\alpha^n}{(-D')\beta^m}-1 .$$
A similar argument to that used in Case-3 proves that $\Lambda\neq 0$.
Note that $\gamma_1,\gamma_2,\gamma_3\in \mathbb{Q}(\sqrt3,\sqrt{55})$, we set $d=[\mathbb{Q}(\sqrt3,\sqrt{55}):\mathbb{Q}]=4$. Set
$$B=2m\geq\max\{|b_1|,|b_2|,|b_3|\},\quad A_2=8>2\log(\alpha)\ge \max\{D h(\gamma_2),|\log\gamma_2|,0.16\},$$
$$A_3=11>2\log(\beta)\ge \max\{D h(\gamma_3),|\log \gamma_3|,0.16\}.$$
Since $h(D')=h(D)$, we have $h(\gamma_1)\leq h(C)+h(D')=h(C)+h(D)<9$ as before, so we set $A_1=36$.
From Theorem \ref{Thm2} and (\ref{eq 10}), we obtain
\begin{align*}
    \exp\big(-1.4\cdot 30^{6}\cdot 3^{4.5}\cdot 16\cdot(1+\log 4)\cdot A_1\cdot A_2\cdot A_3\cdot (1+\log2m)\big)<|\Lambda|<\frac{32}{\beta^{m}},
\end{align*}
it follows that
\begin{align*}
    -1.4\cdot 30^{6}\cdot 3^{4.5}\cdot 16\cdot(1+\log 4)\cdot A_1\cdot A_2\cdot A_3\cdot (1+\log2m)<\log32-m\log\beta,
\end{align*}
which gives
\begin{align*}
   m\log\beta&<\log32+1.4\cdot 30^{6}\cdot 3^{4.5}\cdot 16\cdot(1+\log 4)\cdot A_1\cdot A_2\cdot A_3\cdot (1+\log2m)\\
   &<2\cdot 10^{16}\cdot (1+\log2m).
\end{align*}
The above inequality implies that $m<1.6\cdot 10^{17}$. We now need to reduce the upper bound obtained for $n$. For this, we use Lemma \ref{Lem1}.

Let $z=\log\left(\dfrac{C}{-D'}\right)+n\log\alpha-m\log\beta$. Observe that $z\neq0$ as $\Lambda\neq0$ for $n,m\geq2$. If $z>0$, then from (\ref{eq 10}), we have
$$0<z\le e^z-1<\dfrac{32}{\beta^m}.$$
If $z<0$, then $|e^z-1|<\dfrac{32}{\beta^m}<\dfrac12$ for $m\ge 2$, so
$|z|<2|e^z-1|<\dfrac{64}{\beta^m}$. In both cases
\begin{align*}
\left|\log\left(\frac{C}{-D'}\right)+n\log\alpha-m\log\beta\right|<\frac{64}{\beta^m}.
\end{align*}
After multiplying both sides of the above inequality by $\dfrac{1}{\log\beta}$, we obtain
\begin{align*}
\left|\frac{\log\left(\dfrac{C}{-D'}\right)}{\log\beta}+n\frac{\log\alpha}{\log\beta}-m\right|<\frac{64}{\log\beta\cdot\beta^m}<\frac{13}{\beta^m}.
\end{align*}
Now, let us apply Lemma \ref{Lem1} with
\begin{align*}
u=n,\quad \gamma=\frac{\log\alpha}{\log \beta},\quad v=m,\quad \mu=\frac{\log\left(\dfrac{C}{-D'}\right)}{\log\beta},\quad A=13,\quad B=\beta,\quad w=m.
\end{align*}
Since $m<1.6\cdot 10^{17}$ and $u=n<2m<3.2\cdot 10^{17}$, we set $M=3.2\cdot 10^{17}$. Choosing  
$$\frac{p}{q}=\frac{p_{37}}{q_{37}}=\frac{2487021452540632304}{3261842661237276437},$$
we get $q_{37}>6M$ and
$$\epsilon=||\mu q||-M||\gamma q||=0.3987\ldots>0.$$
Therefore, by Lemma \ref{Lem1}, we have
\begin{align*}
    m<\frac{\log(Aq/\epsilon)}{\log B}<9.
\end{align*}
Therefore, $m\leq 8$. Now, for $2\leq m\leq 8$, we have $\xi'_{1,n}\neq \eta_{2,m}$.\\
This completes the proof.
\end{proof}

\begin{proof}[Proof of Corollary \ref{Theorem 1.9}]
If possible assume that there exits positive integers $a\neq b$ such that $\{4,108,165,a,b\}$ is a star Diophantine $D(1)$ quintuple. Then
the sets $\{4,108,165,a\}$ and $\{4,108,165,b\}$ are a star Diophantine $D(1)$ quadruple, but by Theorem \ref{Theorem 1.8}, we have $a=b=360$, which is a contradiction.
\end{proof}

\section{Data Availability} 	
The authors confirm that their manuscript has no associated data.
\section{Competing Interests}
The authors confirm that they have no competing interests. No AI has been used for any part of this manuscript.


\begin{thebibliography}{99}
\bibitem{BCM}
N. C. Bonciocat, M. Cipu and M. Mignotte,
\textit{There is no Diophantine $D(-1)$-quadruple},
J. London Math. Soc. \textbf{105} (2022), 63--99.

\bibitem{D4}
M. Bliznac Trebje\v{s}anin and A. Filipin,
\textit{Nonexistence of $D(4)$-quintuples},
J. Number Theory \textbf{194} (2019), 170--217.

\bibitem{r4}
M. Bliznac Trebje\v{s}anin,
\textit{On Diophantine triples containing a triangular number},
Math. Slovaca (to appear) (2026).

\bibitem{r5}
M. Bliznac Trebje\v{s}anin,
\textit{On Diophantine pairs and triples of triangular numbers}, \href{https://doi.org/10.48550/arXiv.2603.29565}{arXiv.2603.29565} (2026).

\bibitem{r3}
S. Bagchi and C. Zhou-Zheng,
\textit{Diophantine $m$-tuples of triangular numbers}, \href{https://doi.org/10.48550/arXiv.2608.27697}{arXiv.2608.27697} (2026).

\bibitem{1st}
M. N. Deshpande,
\textit{One property of triangular numbers},
Portugaliae Math. \textbf{55} (1998), 381--383.

\bibitem{book}
L. E. Dickson,
\textit{History of the Theory of Numbers, Vol.~II},
Chelsea, New York, 1966, 514--519.

\bibitem{conjecture}
A. Dujella,
\textit{Generalization of a problem of Diophantus},
Acta Arith. \textbf{65} (1993), 15--27.

\bibitem{homepage}
A. Dujella,
Diophantine $m$-tuples,
\href{https://web.math.pmf.unizg.hr/~duje/ref.html}{web.math.pmf.unizg.hr/\string~duje/dtuples.html}.

\bibitem{r7}
A. Dujella and A. Peth{\H{o}},
A generalization of a theorem of Baker and Davenport,
\textit{Quart. J. Math. Oxford Ser.} (2) \textbf{49} (1998), no. 195, 291--306.

\bibitem{tri5}
A. Filipin, Y. Fujita and A. Jurasi\'c,
On the non-extendibility of triangular $D(n)$-pairs $\{n,2n\}$ with $n=2k^2\pm k$,
Acta Math. Hungar., to appear.

\bibitem{tri4}
A. Filipin, A. Jurasi\'c and A. Togb\'e,
A note on the extension of some triangular Diophantine sets,
\textit{An. \c{S}t. Univ. Ovidius Constan\c{t}a} \textbf{34}(2) (2026), 29--43.

\bibitem{tri1}
A. Filipin and L. Szalay,
Triangular Diophantine tuples from $\{1,2\}$,
\textit{Rad Hrvat. Akad. Znan. Umjet. Mat. Znan.} \textbf{27} (2023), 55--70.

\bibitem{tri2}
A. Filipin, A. Jurasi\'c and L. Szalay,
Extensions of the triangular $D(3)$-pair $\{3,6\}$,
\textit{Math. Slovaca} \textbf{75} (2025), 775--790.

\bibitem{Fib1}
Y. Fujita and F. Luca,
There are no Diophantine quadruples of Fibonacci numbers,
\textit{Acta Arith.} \textbf{185}(1) (2018), 19--38.

\bibitem{Hamtat}
A. Hamtat,
\textit{Diophantine triples in triangular numbers},
preprint (2025).

\bibitem{r1}
B. He, A. Togb\'e and V. Ziegler,
\textit{There is no Diophantine quintuple},
Trans. Amer. Math. Soc. \textbf{371} (2019), 6665--6709.

\bibitem{Fib2}
B. He, F. Luca and A. Togb\'e,
Diophantine triples of Fibonacci numbers,
\textit{Acta Arith.} \textbf{175} (2016), 57--70.

\bibitem{tri3}
M. Juki\'c Bokun and I. Soldo,
Triangular $D(-1)$-tuples,
Bull. Math. Soc. Sci. Math. Roumanie (2024), to appear.

\bibitem{r13}
E. M. Matveev,
An explicit lower bound for a homogeneous rational linear form in logarithms of algebraic numbers. II,
\textit{Izv. Ross. Akad. Nauk Ser. Mat.} \textbf{64} (2000), no. 6, 125--180.

\bibitem{r2}
Diophantus of Alexandria,
\textit{Arithmetics and the Book of Polygonal Numbers},
I. G. Bashmakova (ed.), Nauka, Moscow, 1974, 103--104, 232.

\bibitem{pell1}
S. E. Rihane, M. O. Hernane and A. Togb\'e,
On Diophantine triples of Pell numbers,
\textit{Colloq. Math.} \textbf{156} (2019), 273--285.

\bibitem{pell2}
S. E. Rihane, F. Luca and A. Togb\'e,
There are no Diophantine quadruples of Pell numbers,
\textit{Int. J. Number Theory} \textbf{18} (2022), 27--45.

\bibitem{r6}
P. Radi\'c,
\textit{On some $D(u^2)$-triples and quadruples of triangular numbers},
\href{https://doi.org/10.48550/arXiv.2609.28018}{arXiv.2609.28018} (2026).
\end{thebibliography}
\end{document}